\newcommand{\xRightarrow}[2][]{\ext@arrow 0359\Rightarrowfill@{#1}{#2}}
\numberwithin{equation}{subsection}
\newcommand{\dd}{/\hspace*{-0.5mm}/}
\newcommand{\op}{\text{op}}
\def\Card{\mathrm{Card}}
\def\Set{{\mathsf{Set}}}
\def\TTop{{\mathsf{Top}}}
\def\Pos{{\mathsf{Pos}}}
\def\Preord{{\mathsf{Preord}}}
\def\Cat{{\mathsf{Cat}}}
\def\CAT{{\mathsf{CAT}}}
\def\Grp{\mathsf{Grp}}
\def\tCat{{2\textrm{-}\mathsf{Cat}}}
\def\colim{{\mathrm{colim}}}
\def\functorA{{ \overline{A} }}
\def\functorB{{ \overline{B} }}
\def\functorS{{ \overline{s} }}
\def\Ran{\textrm{Ran}}
\def\Lan{\textrm{Lan}}
\def\YonedaE{\mathcal{Y}}
\def\imagefactorizationofprecomposition{\mathrm{Im}\left( \cv\textrm{-}\Cat\left[  J , \cc\right]  \right) }
\def\imagefactorizationofprecompositionbo{\cv\textrm{-}\Cat\left[  J , \cc\right] _{\mathrm{Im}} }
\theoremstyle{plain}
\newtheorem{theorem}{Theorem}[section]
\newtheorem{proposition}[theorem]{Proposition}
\newtheorem{corollary}[theorem]{Corollary}
\newtheorem{lemma}[theorem]{Lemma}
\theoremstyle{definition}
\newtheorem{definition}[theorem]{Definition}
\newtheorem{example}[theorem]{Example}
\newtheorem{examples}[theorem]{Examples}
\newtheorem{assumption}[theorem]{Assumption}
\newtheorem{remark}[theorem]{Remark}
\newtheorem{notation}[theorem]{Notation}
\newtheorem{question}[theorem]{Question}
\def\co{{\mathrm{co}}}
\def\id{\mathrm{id}}
\def\N{{\rm I\kern-.20em N}}
\def\R{{\rm I\kern-.17em R}}
\def\Z{{\rm Z\kern-.32em Z}}
\newcommand{\gl}{\lambda}
\newcommand{\gb}{\beta}
\newcommand{\ga}{\alpha}
\newcommand{\wrt}{ with respect to }
\newcommand\ca{\mathcal{A} }
\newcommand\cb{\mathcal{B}}
\newcommand\cc{\mathcal{C} }
\newcommand\cd{\mathcal{D} }
\newcommand\cv{\mathcal{V} }
\newcommand\cvhf{\mathcal{V}\textrm{-} }
\newcommand\DisFib{\mathtt{DisFib}}
\newcommand\Fib{\mathtt{Fib}}
\newcommand\cm{\mathcal {M}}
\newcommand\ce{\mathcal {E}}
\newcommand\ci{\mathcal {I}}
\newcommand\Ef{\ce\!\!\mid \!\! f}
\newcommand\A{\mathbb{A}}
\newcommand\B{\mathbb{B}}
\newcommand\C{\mathbb{C}}
\renewcommand\S{\mathbb{S}}
\renewcommand\D{\mathbb{D}}
\newcommand\E{\mathbb{E}}
\begin{document}
%
\FXRegisterAuthor{fl}{aja}{\color{magenta}FL}
\FXRegisterAuthor{ls}{als}{\color{blue}LS}

\title[On lax epimorphisms and the associated factorization]
{On lax epimorphisms and the associated factorization}

\author{Fernando Lucatelli Nunes}
\address{Utrecht University, Netherlands}
\email{f.lucatellinunes@uu.nl}
\author{Lurdes Sousa}
\address{ University of Coimbra, CMUC, Department of Mathematics, Portugal  \& Polytechnic Institute of Viseu, ESTGV, Portugal}
\email{sousa@estv.ipv.pt}
\thanks{This research was partially supported  by the Centre for Mathematics of the University of Coimbra - UIDB/00324/2020, funded by the Portuguese Government through FCT/MCTES}

\keywords{fully faithful morphisms, factorization systems,  2-categories, enriched categories, weighted limits}
\subjclass{18N10, 18A20, 18D20, 18A32, 18A22}
\date{March 24, 2022}

\begin{abstract}
 We study lax epimorphisms in 2-categories, with special attention to Cat and $\cv$-Cat. We show that any 2-category with convenient colimits has an orthogonal $LaxEpi$-factorization system, and we give a concrete description of this factorization in $\Cat$.
\end{abstract}

\maketitle

\section{Introduction}

A morphism $e:A\to B$ in a category $\mathbb{A}$ is an epimorphism if, for every object $C$,  the map $\A(e,C):\A(B,C)\to \A(A,C)$ is injective; looking at the hom-sets as discrete categories, this means that the functor $\A(e,C)$ is fully faithful. \textit{Lax epimorphisms} (also called \textit{co-fully-faithful} morphisms) are a 2-dimensional version of epimorphisms; in a 2-category they are precisely the 1-cells  $e$ making $\A(e,C)$ fully faithful for all $C$.

One of the most known (orthogonal) factorization systems in the category  of small categories and functors is the comprehensive factorization system of Street and Walters \cite{SW}. Another known factorization system consists of bijective-on-objects functors on the left-hand side and fully faithful functors on the right. Indeed in both cases we have an orthogonal factorization system in the 2-category $\Cat$ in the sense of Definition \ref{d:ofs}. This means that with the usual notion in ordinary categories we have a 2-dimensional aspect of the diagonal fill-in property. Here we show that $\Cat$ has also an orthogonal $(\ce, \cm)$-factorization system  where $\ce$ is the class of all lax epimorphisms, and present a concrete description of it, making use of a characterization of the  lax epimorphic functors given in \cite{AeBSV} (Theorem \ref{t:f-Cat}).

Moreover, any 2-category  has an orthogonal $(LaxEpi, LaxStrongMono)$-factorization system provided that it has 2-colimits and is almost cowellpowered with respect to lax epimorphisms (Theorem \ref{t:ofs}). Here to be \textit{almost cowellpowered} with respect to a class $\ce$ of morphisms means that, for every morphism $f$, the category of all factorizations $d\cdot e$ of $f$ with $e\in \ce$ has a weakly terminal set. A key property is the fact that lax epimorphisms are closed under 2-colimits (Theorem \ref{t:stcolim}).

We dedicate the last section to the study of lax epimorphisms in the 2-category $\cv$-$\Cat$ for $\cv$ a complete symmetric monoidal closed category. In this context, it is natural to consider a variation of the notion of lax epimorphism: We say that  a $\cv$-functor $J:\ca\to \cb$ is a $\cv$-lax epimorphism if the $\cv$-functor  $\cv$-$\Cat[J,\cc]: \cv$-$\Cat[\cb,\cc]\to \cv$-$\Cat[\ca,\cc]$  is $\cv$-fully faithful for all small $\cv$-categories $\cc$.
Assuming that $\cv$ is also cocomplete, Theorem \ref{t:V-lax} gives several characterizations of the  lax epimorphisms in the 2-category $\cv$-$\Cat$. In particular, we show that they are precisely the $\cv$-lax epimorphisms, and also precisely those $\cv$-functors for which there is an  isomorphism $\Lan _J\cb (B, J - )\cong \cb (B, -) $ ($\cv$-natural in $B\in\cb ^\op $). Moreover, $\cv$-lax epimorphisms are equivalently defined if above we replace all small $\cv$-categories $\cc$ by all possibly large $\cv$-categories $\cc$, or by just the category $\cv$. This last characterization, as well as Theorem \ref{laxepi-if-and-only-if-absolutelydense}, which characterizes $\cv$-lax epimorphisms as absolutely $\cv$-(co)dense $\cv$-functors, have been proved for $\cv=\Set$ in \cite{AeBSV}.

For the basic theory on 2-categories we refer to \cite{KellyStreet} and \cite{lack10}. For a detailed account of 2-dimensional (co)limits, see \cite{kelly89}; here we use the notation $\lim(W,F)$ for the limit of $F:\A\to \B$ weighted (``indexed'' in Kelly's language) by $W:\A\to \Cat$. Concerning enriched categories, we refer to \cite{kelly82}.

\section{Lax epimorphisms in 2-categories}

In this section we present some basic properties and examples on lax epimorphisms. We end up by showing that, under reasonable conditions, for 2-categories $\S$ and $\B$, every lax epimorphism of the 2-category  2-$\Cat[\S,\B]$ is pointwise. Pointwise lax epimorphisms will have a role in the main result of Section \ref{(LE,LSM)}.

\begin{definition}\label{d:laxepi}
A {\em lax epimorphism} in a 2-category $\A$ is a 1-cell $f:A\to B$ for which all the hom-functors
$$\A(f,C):\A(B,C)\to \A(A,C)$$
(with $C\in \A$) are fully faithful.\end{definition}

\begin{remark}[Duality and Coduality]\label{r:ff}
The notion of lax epimorphism is dual to the the notion of \textit{fully faithful morphism} (in a $2$-category). That is the reason why lax epimorphisms are also called  {\em   co-fully-faithful morphisms}.
Indeed, the notion fully faithful morphism in the 2-category of small categories $\Cat$  coincides with the notion  of  fully faithful functor, since a functor $P:\A\to \B$ is fully faithful if and only if    the functor $\Cat(\C, P):\Cat(\C, \B)\to \Cat(\C, \A)$ is fully faithful for all categories $\C$.

On the other hand, the notion of lax epimorphism is \textit{self-codual}. Namely, a morphism $p : A\to B $ is a lax epimorphism in $\A $ if and only if the corresponding morphism in $\A ^\co $ (the $2$-category obtained after inverting the directions of the $2$-cells in $\A$) is a lax epimorphism.
\end{remark}

\begin{remark}\label{r:1-out-2}\label{rem:closed-isomorphism-classes}
Lax epimorphisms are closed for isomorphism classes. That is to say, if $f\cong g$ and $g $ is a lax epimorphism, then so is $f$. Moreover, we have that lax epimorphisms are closed under  composition and are right-cancellable: for composable morphisms $r$ and $s$,  if $r$ and  $sr$ are lax epimorphisms, so is $s$.
\end{remark}

\begin{examples}\label{es:lax-epi}
\begin{enumerate}
\item\label{lax-epi:epi} In a locally discrete  2-category,  lax epimorphisms are just epimorphisms,    since fully faithful  functors between discrete categories  are   injective functions on the objects. But, in general,  the class of  lax epimorphisms and the one of  epimorphisms are different and no one contains the other (see \cite{AeBSV}).

\item\label{lax-epi:coeq} Coequifiers  are lax epimorphisms. The property of being a lax epimorphism is precisely the two-dimensional aspect of  the universal property of a coequifier (see \cite[pag.~309]{kelly89}). But, as observed in \cite{AeBSV}, coequalizers in $\Cat $ are not necessarily lax epimorphisms.

    \item\label{lax-epi:equi}
     Any equivalence is a lax epimorphism. Recall that a morphism $g:A\to B$ is an equivalence if there is $f:B\to A$ with $gf\cong 1_B$ and $fg\cong 1_A$. This is equivalent to the existence of an adjunction between $f$ and $g$ with both unit and counit being invertible, and it is also well known that it is equivalent to the existence of an adjunction $(\varepsilon, \eta):f\dashv g$ together with both $f$ and $g$ fully faithful. Dually, $g:A\to B$  is an equivalence  if and only if   there is an adjunction $(\varepsilon, \eta):f\dashv g$  with both $f$ and $g$ being lax epimorphisms.
     Moreover, given an adjunction $(\varepsilon, \eta):f\dashv g : A\to B$ in a $2$-category $\A $, the morphism $g$ is a lax epimorphism if and only if $f$ is fully faithful, if and only if $\eta$ is invertible (see \cite[Lemma~2.1]{lack10}).

\item\label{lax-epi:coin} In a locally thin  2-category  (i.e., with the hom-categories  being preordered sets), the lax epimorphisms are the order-epimorphisms, i.e.,  morphisms $f$ for which  $g\cdot f\leq h\cdot f$ implies $g\leq h$; and coinserters are lax epimorphisms -- this immediately follows from the definition of coinserter (see, for instance, \cite[pag.~307]{kelly89}).

    However,  coinserters are not lax epimorphisms in general; we indicate a simple counter-example in the 2-category  $\Cat$ of small categories.\footnote{This rectifies  \cite[Example~2.1.1]{AeBSV}.}   Let $\ca$ be the discrete category  with a unique 	object    $A$,  $\cb$  the discrete category  with two objects, $FA$ and $GA$, and $F,G:\ca\to\cb$ the functors defined  according to the name of the objects     of $\cb$. The coinserter of $F$ and $G$ is an inclusion $P:\cb\to \cc$, where $\cc$ has the same objects     as $\cb$ and a unique non trivial morphism, $\ga_A:FA\to GA$. More precisely, the coinserter is given by the pair $(P, \ga)$. (For a description of  coinserters in $\Cat$, see \cite{BKPS}, Example 6.5.)
But $P$ is not a lax epimorphism. Indeed, let    $J,K:\cc\to \cd$ be two functors, where  the category  $\cd$ consists of four objects      and six non trivial morphisms as in the diagram below, with $K\alpha_A\cdot \gamma_{FA}=r\not=s=\gamma_{GA}\cdot J\alpha_A$:
$$\xymatrix{JFA\ar[rr]^{\gamma_{FA}}\ar@<0.9ex>[rrd]^r\ar@<-1.0ex>[rrd]^*-<0.9em>{^{\not=}}_s\ar[d]_{J\ga_{A}}&&KFA\ar[d]^{K\ga_{A}}\\
JGA\ar[rr]_{\gamma_{GA}}&&KGA}$$
Then, we have a natural transformation $\gamma: JP\to KP$ which cannot be expressed as $\gamma=\overline{\gamma}\ast \id_P$ for any $\overline{\gamma}:J\Rightarrow K$.

\item\label{lax-epi:pos}
In the 2-category $\Pos$ of posets, monotone functions and pointwise order between them, lax epimorphisms coincide with epimorphisms, and also with coinserters of some pair of morphisms (see  \cite[Lemma 3.6]{ASV}).

\item\label{lax-epi:preord} In $\Preord$, lax epimorphisms need not to be  epimorphisms: they are just the monotone maps $f:A\to B$  such that  every $b\in B$ is isomorphic to $f(a)$ for some $a$.

Moreover, coinserters are strictly contained in lax epimorphisms, they are precisely the monotone bijections.  Indeed, given $f,g:A\to B$, let $\bar{B}$ be the  underlying set of $B$ with the preorder given by the reflexive and transitive closure of $\leq_B\cup \leq^{\prime}$, where $\leq_B$ is the order in $B$ and $y\leq^{\prime} z$ whenever there is some $x\in A$ with $y\leq f(x)$ and $g(x)\leq z$; the coinserter is the identity map from $B$ to $\bar{B}$. Conversely, if $h:B\to C$ is a monotone bijection, it is the coinserter of the projections $\pi_1, \pi_2: P\to B$, where $P$ is the comma object of  $h$ along itself.

Observe that the functor $P:\cb\to \cc$ of Example \eqref{lax-epi:coin} is indeed a morphism of the full 2-subcategory $\Preord$ of $\Cat$; it is a lax epimorphism in $\Preord$ but not in $\Cat$.

\item\label{lax-epi:grp} Let $\Grp$ be the 2-category of groups, homomorphisms, and  2-cells from $f$ to $g$ in $\Grp(A,B)$ given by those  elements $\alpha$ of $B$ with  $f(x)\circ \alpha= \alpha \circ g(x)$, for all $x\in A$ (where $\circ$ denotes the group multiplication). The horizontal composition of $\alpha:f\to g$ with $\beta:h\to k:B\to C$ is given by $\beta\ast \alpha=h(\alpha)\circ \beta =\beta\circ k(\alpha)$;  and the unit on an arrow $f:A\to B$ is simply the neutral element of $B$ (see \cite{Borceux1})\footnote{This 2-category is the full subcategory of 2-$\Cat$  of all groupoids with just one object.}.

The lax epimorphisms of $\Grp$ are precisely the regular epimorphisms, that is, surjective homomorphisms. Indeed, given a surjective homomorphism  $f:A\to B$,   homomorphisms $g,h:B\to C$ and an element $\gamma\in C$, the equalities  $g(f(x))\circ \gamma =\gamma \circ (h(f(x))$ for all $x\in A$ imply $g(y)\circ \gamma=\gamma \circ h(y)$ for all $y\in B$, showing that $f$ is a lax epimorphism.  Conversely, given a lax epimorphism   $f:A\to B$, consider its $(RegEpi, Mono)$-factorization   in $\Grp$:
$$\xymatrix{A\ar[r]^{q}&M\ar[r]^m&B}.$$
Since $q$ and $qm$ are lax epimorphisms, so is $m$, by Remark \ref{r:1-out-2}. We show that then $m$ is an isomorphism.
In $\Grp$, monomorphisms are regular (see \cite{AHS}); let $g,h:B\to C$ be a pair whose equalizer is the inclusion $m:M\hookrightarrow B$, that is, $M=\{y\in B\mid g(y)=h(y)\}$. Denoting the neutral element of $C$ by $e$, we have a 2-cell $e: gm\to hm$. Since $m$ is a lax epimorphism,   there is a unique $\alpha:g\to h$ with $\alpha\ast e=e$. But $\alpha\ast e=g(e)\circ \alpha=\alpha\circ h(e)=\alpha$; hence $\alpha=e$, that is, $g(y)\circ e=e\circ h(y)$ for all $y\in B$. Thus, $B=M$ and $m$ is the identity morphism.
\end{enumerate}
\end{examples}

\begin{remark}
Recall that a $2$-functor $G : \A\to\B$ is \textit{locally fully faithful} if, for any $A,B\in\A $, the
functor $G _{A,B} : \A\left( A, B \right)\to \B\left( G (A), G(B)\right)    $ is fully faithful.

It is natural to consider lax epimorphisms in the context of  $2$-adjunctions or biadjunctions. Let
$ \left( \varepsilon , \eta \right): F\dashv G:\A\to \B $
be a $2$-adjunction (respectively, biadjunction).
In this case, we have that, for any $A,B \in \A $,
\begin{equation}\label{eq:adjunction-diagram-counit}
\begin{tikzpicture}[x=7cm, y=1.5cm]
\node (a) at (0,0) {$\A\left( A, B\right)  $};
\node (b) at (1, 0) { $\B\left( G(A), G(B) \right)  $};
\node (c) at (1,-1) {$\A \left( FG(A), B\right) $};
\draw[->] (a)--(c) node[midway,below left] {$\A \left( \varepsilon _ A, B\right) $};;
\draw[->] (a)--(b) node[midway,above] {$G_{A,B}  $};
\draw[->] (b)--(c) node[midway,right] {$\chi _{ G(A) ,B } $};
\end{tikzpicture}
\end{equation}
commutes (respectively, commutes up to an invertible natural transformation), in which \begin{eqnarray*}
\chi _{ G(A) ,B } : &\B \left( G (A), G (B) \right) & \rightarrow\A \left( FG (A), B \right) \\
                                 & h & \mapsto   \varepsilon  _{B}\circ F (h)
\end{eqnarray*}
is the invertible functor (respectively, equivalence) of the $2$-adjunction (biadjunction).

In the situation above, since isomorphisms (respectively, equivalences) are fully faithul and fully faithful functors are left-cancellable
(see Remark \ref{r:1-out-2}),  we have that
$G_{A,B} : \A \left( A,B\right) \to \B \left( G(A), G(B)\right) $ is fully faithful if, and only if, $\A \left( \varepsilon _ A, B\right) $ is fully faithful. Therefore, the $2$-functor $G: \A\to\B $ is locally fully faithful if and only if $\varepsilon _ C $ is a lax epimorphism for every $C\in \A $.
\end{remark}

\begin{remark}\label{rem:laxepimorphisms-characterization-opcomma-objects}
It is known that in a 2-category with cotensor products, fully faithful  morphisms are those $p:A\to B$ such that the comma object of $p$ along itself is isomorphic to the cotensor product $2\pitchfork A$. Dually, assuming the existence of tensor products, a morphism $p:A\to B$ is a
 lax epimorphism if and only if
\begin{equation*}
\begin{tikzpicture}[x=3cm, y=2cm]
\node (a) at (0,0) {$A $};
\node (b) at (1, 0) { $B$ };
\node (c) at (1,-1) {$\mathsf{2}\otimes B $};
\node (d) at (0,-1) {$ B $};
\draw[->] (a)--(b) node[midway,above] {$ p  $};
\draw[->] (b)--(c) node[midway,right] {$ \nu _1  $};
\draw[->] (a)--(d) node[midway,left] {$ p  $};
\draw[->] (d)--(c) node[midway,below] {$ \nu _0  $};
\draw[double,->] (0.3,-0.6)--(0.7,-0.6) node[midway,above] {$\alpha\ast \id _ p $};
\end{tikzpicture}
\end{equation*}
is an opcomma object, in which
\begin{equation*}
\begin{tikzpicture}[x=2.5cm, y=0.5cm]
\node (a) at (0,0) {$B $};
\node (c) at (2,0) {$\mathsf{2}\otimes B $ };
\node (g) at (1,-2.1) {$\nu _ 0 $};
\node (f) at (1,2.1) {$\nu _1 $};
\draw[->] (a) to[bend right] (c);
\draw[->] (a) to[bend left] (c);
\draw[double,->] (1,-1)--(1,1) node[midway,right] {$\alpha $};
\end{tikzpicture}
\end{equation*}
is the tensor product.
\end{remark}

Since, in the presence of tensor products, lax epimorphisms are characterized by opcomma objects as above, we conclude that:
\begin{lemma}\label{lem:creation-of-colimits-vs-laxepi}
Let $F : \B\to\A $ be a $2$-functor.
\begin{enumerate}[1.]
    \item Assuming that $\B $ has tensor products, if $F$ preserves opcomma objects and tensor products, then $F$ preserves lax epimorphisms.
    \item Assuming that $\A $ has tensor products, if $F$ creates opcomma objects and tensor products, then $F$ reflects lax epimorphisms.
\end{enumerate}
\end{lemma}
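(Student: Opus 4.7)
My plan is to reduce both items to Remark~\ref{rem:laxepimorphisms-characterization-opcomma-objects}, which, in the presence of the relevant tensor product, characterises a morphism $p:A\to B$ as a lax epimorphism precisely when the cocone on $(p,p)$ induced by the canonical 2-cell $\alpha\ast\id_p : \nu_0 p \Rightarrow \nu_1 p$ exhibits $\mathsf{2}\otimes B$ as an opcomma object of $p$ along itself. Both preservation and creation of lax epimorphisms then become assertions about transporting this particular opcomma cocone across $F$, which is exactly what the hypotheses on tensor products and opcomma objects deliver.

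For item (1), I would start from a lax epimorphism $p:A\to B$ in $\B$. Remark~\ref{rem:laxepimorphisms-characterization-opcomma-objects} gives that the canonical tensor cocone on $B$ is an opcomma of $(p,p)$ in $\B$. Applying $F$, preservation of opcomma objects turns this into an opcomma of $(F(p),F(p))$ in $\A$, while preservation of tensor products identifies its apex and legs with the canonical tensor cocone on $F(B)$ (and in particular guarantees that $\mathsf{2}\otimes F(B)$ exists in $\A$). Remark~\ref{rem:laxepimorphisms-characterization-opcomma-objects}, applied in $\A$, then yields that $F(p)$ is a lax epimorphism.

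For item (2), I would begin with $F(p)$ a lax epimorphism in $\A$. Since $\A$ has tensor products, $\mathsf{2}\otimes F(B)$ exists, and creation of tensor products lifts it to $\mathsf{2}\otimes B$ in $\B$, preserved by $F$. Invoking Remark~\ref{rem:laxepimorphisms-characterization-opcomma-objects} in $\A$, the tensor cocone on $F(B)$ is already an opcomma of $(F(p),F(p))$. The tensor cocone on $B$, together with $\alpha\ast\id_p$, is then a cocone on $(p,p)$ in $\B$ whose image under $F$ is (canonically isomorphic to) this opcomma cocone. Here is where I invoke creation of opcomma objects: any cocone in $\B$ whose $F$-image is an opcomma must itself be an opcomma. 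Consequently the tensor cocone on $B$ is the opcomma of $(p,p)$, and Remark~\ref{rem:laxepimorphisms-characterization-opcomma-objects} concludes that $p$ is a lax epimorphism.

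The only delicate point I anticipate is unpacking the precise content of ``creates'' in the 2-categorical setting: for item (2) one needs both the existence part (to produce $\mathsf{2}\otimes B$ and the opcomma of $(p,p)$ in $\B$) and the uniqueness/reflection part (to force the lift of an opcomma cocone to be an opcomma). Once that is stated carefully, the argument is a clean diagrammatic transport across $F$ through Remark~\ref{rem:laxepimorphisms-characterization-opcomma-objects}.
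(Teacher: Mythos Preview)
Your proposal is correct and follows exactly the approach the paper intends: the lemma is stated immediately after Remark~\ref{rem:laxepimorphisms-characterization-opcomma-objects} with the single sentence ``Since, in the presence of tensor products, lax epimorphisms are characterized by opcomma objects as above, we conclude that\ldots'', so the paper's proof is precisely the reduction you describe. Your expanded treatment of item~(2), including the care about what ``creates'' must supply (existence of the lift and that any cocone mapping to the opcomma is itself the opcomma), fills in details the paper leaves implicit.
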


Moreover, we also have that:
\begin{lemma}\label{lem:2-adjunction-preservation-of-lax-epi}
Let $F \dashv G $ be a $2$-adjunction.
\begin{enumerate}[(1)]
    \item \label{Lemma:lax-epi-2adjunction(a)} The $2$-functor $F : \B\to\A $ preserves lax epimorphisms.
    \item \label{Lemma:lax-epi-2adjunction(b)} If $G$ is essentially surjective, then $F$ reflects
    lax epimorphisms.
\end{enumerate}
\end{lemma}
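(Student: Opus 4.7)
The argument for both parts rests on the natural isomorphism
\[
\chi_{X,C}\colon \A(F(X),C)\xrightarrow{\cong}\B(X,G(C)),
\]
coming from the $2$-adjunction $F\dashv G$ (cf.\ \eqref{eq:adjunction-diagram-counit}), which is natural in $X\in\B^{\op}$ and in $C\in\A$. For any $1$-cell $p\colon X\to Y$ in $\B$ and any object $C\in\A$, naturality of $\chi$ in the first variable gives a commutative square of functors whose horizontal arrows are $\chi_{Y,C}$ and $\chi_{X,C}$ (both invertible), whose left vertical is $\A(F(p),C)$, and whose right vertical is $\B(p,G(C))$. Since fully faithfulness is invariant under isomorphism and is left-cancellable (Remark~\ref{r:1-out-2}), $\A(F(p),C)$ is fully faithful if and only if $\B(p,G(C))$ is fully faithful.

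For \eqref{Lemma:lax-epi-2adjunction(a)}, assume that $p$ is a lax epimorphism in $\B$. Then $\B(p,D)$ is fully faithful for every $D\in\B$; specialising to $D=G(C)$ for each $C\in\A$ and invoking the square above, $\A(F(p),C)$ is fully faithful for every $C\in\A$. By Definition~\ref{d:laxepi} this means precisely that $F(p)$ is a lax epimorphism in $\A$.

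For \eqref{Lemma:lax-epi-2adjunction(b)}, assume that $F(p)$ is a lax epimorphism and fix an arbitrary $D\in\B$. By essential surjectivity of $G$ there exist $C\in\A$ and an isomorphism $\phi\colon D\cong G(C)$; precomposition with $\phi$ and $\phi^{-1}$ yields a natural isomorphism $\B(p,D)\cong\B(p,G(C))$. Combining this with the square, $\B(p,D)$ is isomorphic to $\A(F(p),C)$, which is fully faithful by hypothesis. Hence $\B(p,D)$ is fully faithful for every $D\in\B$, so $p$ is a lax epimorphism. The only mild subtlety is bookkeeping: one must check that ``fully faithful'' transports correctly through both $\chi$ and the isomorphism $\phi$, which follows from the naturality of $\chi$ and the stability of fully faithful functors under isomorphism. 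Note that no further structure on $\A$ or $\B$ is required, in contrast with Lemma~\ref{lem:creation-of-colimits-vs-laxepi}, which yields an alternative proof of \eqref{Lemma:lax-epi-2adjunction(a)} only under the additional assumption that $\B$ has tensor products.
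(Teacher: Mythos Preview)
Your argument is correct and coincides with the paper's own proof: both use the naturality square relating $\A(F(p),C)$ and $\B(p,G(C))$ via the adjunction isomorphisms $\chi$, then specialise $C$ (part~\eqref{Lemma:lax-epi-2adjunction(a)}) or invoke essential surjectivity of $G$ (part~\eqref{Lemma:lax-epi-2adjunction(b)}). The only quibble is that Remark~\ref{r:1-out-2} concerns lax epimorphisms rather than fully faithful functors, but the needed fact (that fully faithfulness transfers across isomorphisms) is immediate anyway.
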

\begin{proof}
For any object $W$ of $\A $ and any morphism $p: A\to B$ of $\B $, the diagram
\begin{equation}\label{eq:diagram-chi-2adjunction}
\begin{tikzpicture}[x=7cm, y=2cm]
\node (c) at (0,0) {$\A \left( F(B), W \right) $};
\node (d) at (1, 0) { $\A \left( F(A) , W\right)   $ };
\node (a) at (1,-1) {$\B \left( A, G(W)  \right)  $};
\node (b) at (0,-1) {$ \B \left( B, G(W)  \right)  $};
\draw[->] (c)--(d) node[midway,above] {$ \A \left( F(p), W \right)  $};
\draw[<->] (b)--(c) node[midway,right] {$ \cong  $} node[midway,left] {$ \chi _{A,W}  $};
\draw[<->] (a)--(d) node[midway,left] {$ \cong  $} node[midway,right] {$ \chi _{B,W}  $};
\draw[->] (a)--(b) node[midway,below] {$ \B \left( p, G(W) \right) $};
\end{tikzpicture}
\end{equation}
commutes.

\eqref{Lemma:lax-epi-2adjunction(a)} If $p: A\to B$ is a lax epimorphism in $\B $, for any $W\in \A$, we have that $\B \left( p, G(W) \right)$ is fully faithful and, hence, by the commutativity of \eqref{eq:diagram-chi-2adjunction}, $\A \left( F(p), W \right) $
is fully faithful.

\eqref{Lemma:lax-epi-2adjunction(b)} Assuming that $G$ is essentially surjective, if $F(p) : F(A) \to F(B) $ is a lax epimorphism in $\A $, then, for any $Z\in\B $, there is $W\in \A $ such that
$G(W) \cong Z$. Moreover,  $\A \left( F(p), W\right) $ is fully faithful and, hence, $\B\left( p, G(W)\right) $ is fully faithful by
the commutativity of \eqref{eq:diagram-chi-2adjunction}. This implies that $\B\left( p, Z\right) $ is fully faithful for any $Z\in\B $.
\end{proof}


\begin{definition}\label{d:2-nat-trans}
A $2$-natural transformation $\lambda : F\rightarrow G : \S\to\B $ is:
\begin{enumerate}[1.]
\item a \textit{pointwise lax epimorphism} if, for any $C\in\S $, the morphism $\lambda _ C : F(C)\to G(C) $ is a lax epimorphism in $\B $;
\item a \textit{lax epimorphism} if $\lambda $ is a lax epimorphism in the $2$-category
of $\tCat\left[ \S, \B  \right] $ of $2$-functors, $2$-natural transformations and modifications.
\end{enumerate}
\end{definition}

\begin{proposition}\label{p:pointwise-implies-global-laxepi}
Let $\lambda : F\rightarrow G : \S\to\B $  be a $2$-natural transformation.
If $\lambda $ is a pointwise lax epimorphism then it is a lax epimorphism in the $2$-category $\tCat\left[ \S, \B  \right] $.
\end{proposition}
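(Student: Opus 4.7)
The plan is to unfold what it means for $\lambda$ to be a lax epimorphism in $\tCat\left[ \S, \B \right] $ and verify the required fully faithfulness componentwise from the pointwise hypothesis. Explicitly, for every $2$-functor $H:\S\to \B $ one must show that the functor
$$\tCat\left[ \S, \B \right] (\lambda, H) : \tCat\left[ \S, \B \right] (G, H) \to \tCat\left[ \S, \B \right] (F, H), $$
which sends a $2$-natural transformation $\alpha : G\to H$ to $\alpha\cdot \lambda $ and a modification $\Xi : \alpha \to \beta $ to the whiskered modification with components $\Xi _ C \ast \id _{\lambda _ C } $, is fully faithful. By hypothesis, for each $C\in\S $ the functor $\B(\lambda _ C , H(C)) $ is fully faithful, and this will be the only ingredient used.

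For faithfulness, I would simply observe that if $\Xi , \Xi ^\pr : \alpha \to\beta $ satisfy $\Xi \cdot \lambda = \Xi ^\pr \cdot \lambda $, then $\Xi _ C \ast \id _{\lambda _ C } = \Xi ^\pr _ C \ast \id _{\lambda _ C } $ at each $C$, so faithfulness of $\B (\lambda _ C , H(C)) $ gives $\Xi _ C = \Xi ^\pr _ C  $ and hence $\Xi = \Xi ^\pr $. For fullness, starting from a modification $\Gamma : \alpha \cdot \lambda \to \beta\cdot\lambda $ with components $\Gamma _ C : \alpha _C \lambda _ C \to \beta _ C \lambda _ C $, fullness of $\B(\lambda _ C , H(C)) $ yields, for each $C$, a unique $2$-cell $\Xi _ C : \alpha _ C \to \beta _ C $ with $\Xi _ C \ast \id _{\lambda _ C } = \Gamma _ C $. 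It then remains to check that the family $(\Xi _ C)_{C\in\S } $ defines a modification $\Xi : \alpha \to \beta $; once this is done, the equation $\Xi \cdot \lambda = \Gamma $ holds by construction.

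The only non-routine step, and what I would regard as the main (mild) obstacle, is to verify the modification axiom for $\Xi $, namely $\id _{H(f)} \ast \Xi _ C = \Xi _{C^\pr } \ast \id _{G(f)} $ for each morphism $f : C\to C ^\pr $ of $\S $ (both sides being parallel by $2$-naturality of $\alpha $ and $\beta $). To prove it, I would whisker both sides on the right with $\lambda _ C $. On the left this produces $\id _{H(f)} \ast \Gamma _ C $; on the right, using the $2$-naturality equation $G(f)\cdot \lambda _ C = \lambda _{C^\pr }\cdot F(f) $ together with the defining identity $\Xi _{C^\pr } \ast \id _{\lambda _{C^\pr }} = \Gamma _{C^\pr } $, one obtains $\Gamma _{C^\pr } \ast \id _{F(f)} $. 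These two $2$-cells are equal because $\Gamma $ itself is a modification, and the faithfulness of $\B(\lambda _ C , H(C^\pr )) $ then cancels the whiskering by $\lambda _ C $ and delivers the required modification axiom for $\Xi $. This completes the verification and establishes that $\lambda $ is a lax epimorphism in $\tCat\left[ \S, \B \right] $.
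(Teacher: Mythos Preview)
Your proof is correct and follows essentially the same approach as the paper's: both argue componentwise, using the fully faithfulness of each $\B(\lambda_C, H(C))$ to produce a unique family $(\Xi_C)$ and then assembling it into a modification. The paper's proof simply declares the modification axiom ``straightforward,'' whereas you supply the verification explicitly (whiskering by $\lambda_C$ and cancelling via faithfulness), so your argument is in fact a more detailed version of the same proof.
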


\begin{proof}
Let $\lambda:F\to G:\A\to \B$ be a 2-natural transformation with each $\lambda_A:FA\to GA$ a lax epimorphism in $\B$. Let $\ga,\, \gb: G\to H: \A\to \B$ be two 2-natural transformations, and let $\Theta: \ga\ast\gl\leadsto \gb\ast \gl$ be a modification. In particular,  we have 2-cells in $\B$ indexed by $A\in \A$:
\begin{equation*}
\xymatrix@=1.5em{      &GA    \ar[rd]^{\ga_A}    \ar@{}[dd]^{\Theta_A}&\\
FA\ar[ru]^{\gl_A}   \ar[rd]_{\gl_A}&\Downarrow&HA \\
&GA\ar[ru]_{\gb_A}&}
\end{equation*}
This gives rise to unique 2-cells
\begin{equation*}
\xymatrix@=1em{ &&\ar@{=>}[dd]^{\Phi_A}&&  \\   GA    \ar@<1ex>@/^2pc/[rrrr]^{\ga_A}    \ar@<-1ex>@/_2pc/[rrrr]_{\gb_A}&&&&HA\\&&&&}
\end{equation*}
with $\Phi_A\ast \gl_A=\Theta_A$. The uniqueness of $\Phi=(\Phi_A)_{A\in \A}$ is clear. It is straightforward to see that $\Phi$ is indeed a modification.
\end{proof}

However, not every lax epimorphic $2$-natural transformation is a pointwise lax epimorphism. In fact, this is known to be true for epimorphisms and, as observed
in (\ref{lax-epi:epi}) of Examples \ref{es:lax-epi}, lax epimorphisms in locally discrete $2$-categories are the same as epimorphisms.

More precisely, consider the locally discrete $2$-category $\S $ generated by
\begin{equation*}
\begin{tikzpicture}[x=3cm, y=0.7cm]
\node (a) at (0,0) {$ A  $};
\node (b) at (1, 0) { $ B  $};
\node (c) at (2,0) {$ C $};
\node (c1) at (1.9,0.2) {};
\node (c2) at (1.9,-0.2) {};
\node (b1) at (1.1,0.2) {};
\node (b2) at (1.1,-0.2) {};
\draw[->] (a)--(b) node[midway,above] {$h  $};
\draw[->] (b1)--(c1) node[midway,above] {$ f $};
\draw[->] (b2)--(c2) node[midway,below] {$ g $};
\end{tikzpicture}
\end{equation*}
with the equation $fh = gh$.
The pair $(h,f)$ gives an epimorphism
in $\tCat\left[ \mathsf{2} , \S\right]$, where $\mathsf{2}$ is the category of two objects and a non-trivial morphism between them, but $h$ clearly is not an epimorphism in
$\S $. Since $\S$ and  $\tCat\left[ \mathsf{2} , \S\right]  $ are locally discrete,
this proves that $(h,f) $ gives a  $2$-natural transformation which is a lax epimorphism but not a pointwise lax epimorphism.

Yet, it follows from Lemma \ref{lem:2-adjunction-preservation-of-lax-epi} that
the converse holds for many interesting cases. More precisely:

\begin{theorem}
Let  $\B $ be a $2$-category with cotensor products. Then, a $2$-natural transformation
$\lambda : F\to G : \S\to \B $ is a lax epimorphism if and only if it is a pointwise lax epimorphism.
\end{theorem}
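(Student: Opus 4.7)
The ``if'' direction is exactly Proposition \ref{p:pointwise-implies-global-laxepi}, so the content lies in the ``only if'' direction. My plan is to realise each evaluation $2$-functor $\mathrm{ev}_C : \tCat[\S,\B] \to \B$ (for $C \in \S$) as a left $2$-adjoint and then invoke Lemma \ref{lem:2-adjunction-preservation-of-lax-epi}\eqref{Lemma:lax-epi-2adjunction(a)}: this immediately gives that $\mathrm{ev}_C$ preserves lax epimorphisms, and since $\mathrm{ev}_C(\lambda) = \lambda_C$, it follows that $\lambda_C$ is a lax epimorphism in $\B$ for every $C \in \S$, as required.

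To construct a right $2$-adjoint $R_C : \B \to \tCat[\S,\B]$, I would set
$$R_C(B)(D) \; := \; \S(D,C) \pitchfork B,$$
which is well-defined precisely because $\B$ has cotensor products. The assignment $D \mapsto \S(D,C) \pitchfork B$ is covariantly $2$-functorial: the contravariance of $\S(-, C)$ in $D$ is cancelled by the contravariance of $(-) \pitchfork B$ in its first slot, so their composite is covariant; $2$-functoriality in $B$ comes from that of the cotensor in its second argument.

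The hom-isomorphism underlying the putative $2$-adjunction $\mathrm{ev}_C \dashv R_C$,
$$\tCat[\S,\B]\bigl(F,\, R_C(B)\bigr) \;\cong\; \B\bigl(F(C), B\bigr),$$
is then obtained by first rewriting the left-hand side, using the universal property of cotensor, as the category of $2$-natural families of functors $\S(D, C) \to \B(F(D), B)$ indexed by $D \in \S$, and then applying the $2$-categorical Yoneda lemma to the $\Cat$-valued $2$-presheaf $\B(F(-), B) : \S^\op \to \Cat$ paired against the representable $\S(-, C)$. Once this is in place, Lemma \ref{lem:2-adjunction-preservation-of-lax-epi}\eqref{Lemma:lax-epi-2adjunction(a)} closes the argument.

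The step that requires most care is verifying that the above identification is an isomorphism \emph{of categories}, $2$-natural in both $F$ and $B$, so that what we have is a genuine $2$-adjunction and not merely an ordinary adjunction on underlying $1$-categories — this is exactly the input Lemma \ref{lem:2-adjunction-preservation-of-lax-epi} demands. This is handled by the standard end-calculus together with the enriched (i.e.\ $\Cat$-enriched) Yoneda lemma, but it is the one point at which the argument is not simply formal.
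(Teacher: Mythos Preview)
Your proposal is correct and takes essentially the same approach as the paper: the paper presents the evaluation $2$-functor at $s$ as precomposition $\tCat[\overline{s}, \B]$ with $\overline{s}: \mathsf{1} \to \S$, identifies its right $2$-adjoint $a \mapsto \S(a,s) \pitchfork B$ as a pointwise right Kan extension $\Ran_{\overline{s}}$, and then invokes Lemma~\ref{lem:2-adjunction-preservation-of-lax-epi}\eqref{Lemma:lax-epi-2adjunction(a)}. The only difference is packaging --- you verify the $2$-adjunction directly via the cotensor universal property and the $\Cat$-enriched Yoneda lemma, whereas the paper appeals to the general formula for pointwise Kan extensions.
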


\begin{proof}
By Proposition \ref{p:pointwise-implies-global-laxepi},
every pointwise lax epimorphism is an epimorphism. We prove the converse below.

 Let $\mathsf{1}$ be the terminal category with only the object $0$.
  For each $s\in\S $, we denote by  $\functorS : \mathsf{1} \to \S $ the functor defined by $s$.
  For each $\functorB : \mathsf{1}\to\B  $, we have the pointwise right Kan extension (see \cite[Theorem~I.4.2]{Dubuc-KanExtensions}) given by
$$ \Ran _{\functorS } \functorB (a) = \lim\left( \S\left( a,\functorS - \right) , \functorB\right) \cong    \S\left( a, s \right)\pitchfork \left( \functorB 0\right).  $$
We conclude that,   for any $s\in\S$,  we have the $2$-adjunction
$$\tCat\left[ \functorS , \B  \right]\dashv \Ran _{\functorS  } . $$

  Therefore, by Lemma \ref{lem:2-adjunction-preservation-of-lax-epi},
 assuming that $\lambda : F\to G : \S\to \B $ is a lax epimorphism in $\tCat\left[ \S , \B  \right]  $, we have that, for every $s\in\S $,
$$\tCat\left[ \functorS , \B  \right]\left( \lambda \right) = \lambda \ast \id_{\functorS } = \lambda _s  $$
is a lax epimorphism in $\B $.
\end{proof}

\section{The orthogonal $LaxEpi$-factorization system}\label{(LE,LSM)}

Factorization systems in categories have largely shown their importance, taking the attention of many authors since the pioneering work exposed in \cite{FK}. (For a comprehensive account of the origins of the study of categorical factorization techniques see \cite{T1}.) When the category has appropriate colimits, we get one of the most common orthogonal factorization systems, the $(Epi, StrongMono)$ system. Since lax epimorphisms look like an adequate 2-version of epimorphisms, it is natural  to ask for a factorization system involving them.
In this section, we will obtain an orthogonal  $(LaxEpi, LaxStrongMono)$-factorization system in 2-categories.  In the next section we give a description of this orthogonal factorization system in $\Cat$.

The notion of orthogonal factorization system in 2-categories generalizes the ordinary one (see \cite{AHS} or \cite{HST}) by incorporating the two-dimensional aspect in the diagonal fill-in property. Here we use a strict version of the orthogonal factorization systems studied in   \cite{DV} (see Remark \ref{r:DV}):

\begin{definition}\label{d:ofs}
In the 2-category  $\A$, let $\ce$ and $\cm$ be two classes of morphisms closed under composition with isomorphisms from the left and the right, respectively.  The pair $(\ce, \cm)$ forms an {\em orthogonal factorization system} provided that:

\begin{itemize}
  \item[(i)] Every morphism $f$ of $\A$ factors as a composition $f=me$ with $e\in \ce$ and $m\in \cm$.
  \item[(ii)] For every $A\xrightarrow{e}B$ in $\ce$ and $C\xrightarrow{m}D$ in $\cm$, the square
  \begin{equation*}
    \xymatrix{\A(B,C)\ar[r]^{\A(B,m)}\ar[d]_{\A(e,C)}&\A(B,D)\ar[d]^{\A(e,D)}\\
    \A(A,C)\ar[r]^{\A(A,m)}&\A(A,D)}
  \end{equation*}is a pullback in $\Cat$.
\end{itemize}
\end{definition}

\begin{remark}\label{r:DV} In \cite{DV}, Dupont and Vitale studied orthogonal factorization systems in 2-categories  in a non-strict sense. Thus, in (i) of Definition \ref{d:ofs} the factorization holds up to equivalence, and in (ii), instead of a pullback, we have a bi-pullback.
\end{remark}

\begin{remark}\label{r:diag2}
(1) The one-dimensional aspect of (ii) asserts, for each pair of morphisms $f:A\to C$ and $g:B\to D$ with $mf=ge$, the existence of a unique $t:B\to C$ with $te=f$ and $mt=g$. The two-dimensional aspect of (ii) means that, whenever, with the above equalities, we have $t'e=f'$ and $mt'=g'$, and 2-cells $\alpha:f\to f'$ and $\beta:g\to g'$  such that  $m\ast \alpha =\beta\ast e$,
\begin{equation}\label{diag2}
\xymatrix{A\; \ar[rr]^e\ar@<-0.6ex>@/^0.8pc/[d]^{f'}\ar@<-0.7ex>@/_0.8pc/[d]_f^{\stackrel{\alpha}{\Rightarrow}}&&\;\, B\ar@<0.7ex>@/^0.8pc/[d]^g_{\stackrel{\beta}{\Leftarrow}}\ar@<0.6ex>@/_0.8pc/[d]_{g'}\ar@{-->}@/^0.8pc/[lld]^{t'}_{\stackrel{\theta}{\Rightarrow}}\ar@<-0.3ex>@{-->}@/_0.8pc/[lld]_t\\
C\; \, \ar@<-0.7ex>[rr]_m&&\;D}
\end{equation}
 then there is a unique 2-cell $\theta:t\to t'$ with $\theta \ast e=\alpha$ and $m\ast \theta =\beta$.

(2) If $\ce$ is made of lax epimorphisms,  the two-dimensional aspect comes for free. Indeed, for $\alpha:f=te\Rightarrow t'e=f'$, there is a unique $\theta:t\Rightarrow t'$ with $\theta\ast e=\alpha$; and, since $\beta\ast e=m\ast \alpha=m\ast \theta\ast e$, we have $\beta =m\ast \theta$.
\end{remark}

\begin{definition}\label{laxstrong}
A 1-cell $m:C\to D$  is said to be a {\em lax strong monomorphism} if it has the diagonal fill-in property \wrt lax epimorphisms; that is, for every commutative square
      \begin{equation}\label{square}\xymatrix{A\ar[r]^{e}\ar[d]_f&B\ar@{.>}[dl]^t\ar[d]^g\\
     C\ar[r]_m&D}
     \end{equation}
      with $e$ a lax epimorphism, there is a unique $t:B\to C$  such that  $te=f$ and $mt=g$.

      In other words, taking into account Remark \ref{r:diag2}(2),  $m:C\to D$  is  a  lax strong monomorphism  if for every lax epimorphism  $e$, the morphisms $e$ and $m$ fulfil condition (ii) of Definition \ref{d:ofs}.
\end{definition}

\begin{remark}
It is obvious that lax strong monomorphisms are closed under composition and left-cancellable; moreover, their intersection with lax epimorphisms are isomorphisms.
\end{remark}

\begin{proposition}\label{p:inserter} In a 2-category:
\begin{enumerate}
  \item[(i)]
Every inserter is a lax strong monomorphism.
\item[(ii)] In the presence of coequifiers, every lax strong monomorphism is faithful, i.e., a morphism $m$  such that  $\A(X,m)$ is faithful for all $X$.
 \end{enumerate}
\end{proposition}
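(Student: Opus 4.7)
For part (i), the plan is to combine the lax-epi property of $e$ with the inserter universal property of $m$. Concretely, let $m : C\to D$ be the inserter of $u, v : D \to E$, equipped with the universal $2$-cell $\alpha : um \to vm$, and suppose $mf = ge$ with $e : A\to B$ a lax epimorphism. By Remark \ref{r:diag2}(2) it suffices to produce a unique $t : B\to C$ with $te = f$ and $mt = g$. Consider the $2$-cell $\alpha \ast f : umf \to vmf$, which via $mf = ge$ reads as a $2$-cell $(ug) e \to (vg) e$; since $\A(e, E)$ is fully faithful there is a unique $\beta : ug \to vg$ with $\beta \ast e = \alpha \ast f$. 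Applying the inserter universal property to the pair $(g, \beta)$ then yields a unique $t : B\to C$ with $mt = g$ and $\alpha\ast t = \beta$. The identity $te = f$ follows because $m(te) = ge = mf$ and $\alpha\ast(te) = \beta\ast e = \alpha\ast f$, so the one-dimensional uniqueness clause of the inserter property forces $te = f$. Uniqueness of $t$ in the square is symmetric: any competitor $t'$ gives $(\alpha\ast t')\ast e = \alpha\ast f = \beta\ast e$, whence $\alpha\ast t' = \beta$ by fully faithfulness of $\A(e, E)$, and inserter uniqueness forces $t' = t$.

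For part (ii), the plan is, given parallel $2$-cells $\alpha, \beta : f\to g : X\to C$ with $m\ast\alpha = m\ast\beta$, to use the coequifier of $(\alpha, \beta)$ as a lax epimorphism and to apply the diagonal fill-in against $\id_C$, producing a retraction that will force $\alpha = \beta$. Let $e : C \to Y$ be the coequifier of $\alpha, \beta$, which is a lax epimorphism by Examples \ref{es:lax-epi}\eqref{lax-epi:coeq} and satisfies $e\ast\alpha = e\ast\beta$. The hypothesis $m\ast\alpha = m\ast\beta$ together with the universal property of $e$ yields a unique $n : Y \to D$ with $ne = m$, so the square
\begin{equation*}
\xymatrix{C\ar[r]^e\ar[d]_{\id_C} & Y\ar[d]^n\\ C\ar[r]_m & D}
\end{equation*}
commutes. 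Since $m$ is a lax strong monomorphism, this square admits a diagonal $t : Y \to C$ with $te = \id_C$ (and $mt = n$). Whiskering the equality $e\ast\alpha = e\ast\beta$ on the left by $t$ then gives
\begin{equation*}
\alpha = (te)\ast\alpha = t\ast(e\ast\alpha) = t\ast(e\ast\beta) = (te)\ast\beta = \beta,
\end{equation*}
which is the desired faithfulness of $m$.

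The main obstacle in both parts is essentially bookkeeping: selecting the correct universal property to invoke and keeping straight the direction of horizontal composites. Remark \ref{r:diag2}(2) already disposes of the two-dimensional aspect of orthogonality on the lax-epi side, so no deeper difficulty is anticipated; in particular, the existence of coequifiers is needed in (ii) purely to manufacture the lax epimorphism $e$ against which the diagonal fill-in is applied.
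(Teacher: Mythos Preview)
Your proof is correct and follows essentially the same approach as the paper's: part (i) uses the lax-epi property of $e$ to lift $\alpha\ast f$ to a unique $\beta$ and then invokes the inserter universal property, while part (ii) forms the coequifier of the two $2$-cells, factors $m$ through it, and applies the diagonal fill-in against the identity to obtain a retraction. Your write-up is in fact slightly more explicit than the paper's in two places---you spell out the commuting square in (ii) and the final whiskering computation $\alpha = (te)\ast\alpha = \cdots = \beta$, whereas the paper just asserts ``Then $\alpha=\beta$''---but the argument is the same.
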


\begin{proof}
(i) For the commutative square \eqref{square} above let $e$ be a lax epimorphism and let the diagram
\begin{equation}\label{inserter}
\xymatrix@=1em{      &D    \ar[rd]^r     \ar@{}[dd]^{\alpha}&\\
C\ar[ru]^m   \ar[rd]_m&\Downarrow&E \\
&D\ar[ru]_s&}
\end{equation}
be an inserter. Since $e$ is a lax epimorphism,   there is a unique $\beta:rg\Rightarrow sg$ with $\alpha\ast f=\beta\ast e$. This implies the existence of a unique $t:B\to C$  such that  $mt=g$ and $\alpha\ast t=\beta$. Then we have $\alpha \ast(te)=\beta\ast e=\alpha\ast f$ and $m(te)=ge=mf$. Hence, by the universality of $(m, \alpha)$, we conclude that $te=f$. And $t$ is unique: if $mt=mt'$ and $te=t'e$, then we have $\alpha\ast t\ast e=\alpha\ast t'\ast e$, which  implies $\alpha\ast t=\alpha\ast t'$; this together with $mt=mt'$ shows that $t=t'$.

(ii) Given a lax strong monomorphism $m:A\to B$ and two 2-cells $\alpha, \beta:r\to s:X\to A$ with $m\ast \alpha=m\ast \beta$, let $e:A\to C$ be the coequifier of the 2-cells. Then $m$ factors through $e$. Since, by \ref{es:lax-epi}(\ref{lax-epi:coeq}), $e$ is a lax epimorphism,  using the diagonal fill-in property, there is some $t:C\to A$ with $te=1_A$. Then $\alpha=\beta$.
\end{proof}

\begin{examples}
\begin{enumerate}
    \item
In $\Pos$ and $\Preord$ the converse of \ref{p:inserter}(i) also holds. In $\Pos$ lax strong monomorphisms are just order-embeddings\footnote{A morphism $m:X\to Y$ in $\Pos$ or $\Preord$ is an \textit{order-embedding} if $m$ is injective and $m(x)\leq m(y) \Leftrightarrow x\leq y$.} and order-embeddings coincide with inserters (\cite[Lemma 3.3]{ASV}).

Also in $\Preord$ lax strong monomorphisms coincide with inserters. It is easily seen that lax strong monomorphisms are precisely the order-embeddings  $m:X\to Y$ with $m[X]$ closed in $Y$ under isomorphic elements. Let $m:X\to Y$ be a lax strong monomorphism. Let $Z$ be obtained from $Y$ just replacing every element $y\in Y\setminus m[X]$ by two unrelated elements $(y,1)$ and $(y,2)$, and let the maps $f_1, f_2:Y\to Z$ be equal on $m[X]$ and $f_i(y)=(y,i)$, $i=1,2$, for the other cases. Endowing $Z$ with the least preorder which makes $f_1$ and $f_2$ monotone, we see that $m$ is the inserter of $f_1$ and $f_2$.
 \item But, in general, the converse of \ref{p:inserter}(i) is false. Just consider an ordinary category (i.e. a locally discrete 2-category)  with an orthogonal $(Epi, StrongMono)$-factorization system, where strong monomorphisms and regular monomorphisms do not coincide. This is the case, for instance, of the category of semigroups (see \cite[14I]{AHS}).
 In the 2-category $\Cat$ the coincidence of the inserters with the lax strong monomorphisms is left as an open problem (see Question \ref{quest1}).

\end{enumerate}
\end{examples}

\begin{remark}\label{counterex} In contrast to  \ref{p:inserter}, neither equifiers nor equalizers are, in general, lax strong monomorphisms.
Consider the following equivalence of categories, where only the non trivial morphisms are indicated:
\begin{center}
\setlength{\unitlength}{0.4mm}
\begin{picture}(120,40)
\put(-3,20){$A=$}
\put(19,20){$a$}
\put(15,14){\line(1,0){15}}
\put(15,14){\line(0,1){15}}
\put(30,29){\line(-1,0){15}}
\put(30,29){\line(0,-1){15}}
\put(38,20){\vector(1,0){20}}
\put(47,25){$E$}

\put(72,20){$a $}
\put(80,23){${\longrightarrow}$}
\put(85,28){\scriptsize $f$}
\put(96,20){$ b$}
\put(80,17){${\longleftarrow}$}
\put(84,12){\scriptsize $f^{-1}$}
\put(68,7){\line(1,0){37}}
\put(68,7){\line(0,1){30}}
\put(105,37){\line(-1,0){37}}
\put(105,37){\line(0,-1){30}}

\put(108,20){$=B$}
\end{picture}
\end{center}
The functor $E$ is a lax epimorphism  (see Example \ref{es:lax-epi}(\ref{lax-epi:equi})), but not a lax strong monomorphism,   since there is no $T:B\to A$ making the following two  triangles
$$\xymatrix@=1.4em{A\ar@{=}[d]\ar[r]^E&B\ar@{=}[d]\ar[ld]\ar@<-4pt>@{}[dl]^{T} \\ A\ar[r]_E&B}$$
commutative.
But  $E$ is both an equifier and an equalizer. To see that it is an equalizer consider the pair $F,\, \id_B:B\to B$, where $F$ takes all objects to $a$ and all morphisms to $1_a$. To see that it is an equifier consider the category
\begin{equation}\label{category}
C=\; \fbox{\begin{minipage}{14em}
    \xymatrix@=1.2em{Ra\ar[dd]_{\alpha_a=\beta_a}\ar@<0.4ex>@/^0.5pc/[rr]^{Rf}&&Rb\ar@<0.4ex>@/^0.5pc/[ll]^{Rf^{-1}}\ar@<2ex>@/^0.7pc/[dd]^{\alpha_b}\ar@<2ex>@/_0.7pc/[dd]_{\beta_b}\\ \\
    Sa\ar@<0.4ex>@/^0.5pc/[rr]^{Sf}&&Sb\ar@<0.4ex>@/^0.5pc/[ll]^{Sf^{-1}}
    }
\end{minipage}}
\end{equation}
and 2-cells $\alpha,\, \beta:R\to S:B\to C$ given in the obvious way.
\end{remark}

A key property in the sequel is the closedness of lax epimorphisms under colimits, in the sense of \ref{d:stcolim} below. The closedness of classes of morphisms under limits in ordinary categories was studied in \cite{IK}.

\begin{definition}\label{d:stcolim}
  Let $\ce$ be a class of morphisms in a 2-category $\B$. We say that {\em $\ce$ is closed under ($2$-dimensional) colimits} in $\B$ if, for every small 2-category  $\S $, every weight $W : \S ^\op \to\Cat $  and every 2-natural transformation $\lambda:D\to D':\S\to \B$, the induced morphism
   $$\colim \left( W, \lambda  \right) : \mathrm{colim}(W,D)\to \mathrm {colim}(W,D')$$
   is a morphism in the class $\ce$  whenever, for any $C\in\S $,  $\lambda _ C$ is a morphism in $\ce $.
\end{definition}

\begin{theorem}\label{t:stcolim} Lax epimorphisms are closed under ($2$-dimensional) colimits.
\end{theorem}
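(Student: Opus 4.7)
The plan is to apply the representable definition of lax epimorphisms and reduce the statement to a fact about weighted limits in $\Cat $. Fix $X\in \B $; I want to show that the hom-functor
$$\B \bigl( \colim (W, \lambda ), X\bigr) : \B \bigl( \colim (W, D'), X \bigr) \to \B \bigl( \colim (W, D), X\bigr) $$
is fully faithful. By the universal property of weighted colimits there are natural isomorphisms $\B \bigl( \colim (W, D), X\bigr) \cong \left[\S^\op , \Cat \right]\bigl( W , \B (D-, X)\bigr) $ and similarly for $D'$, under which $\B \bigl( \colim (W, \lambda ), X\bigr) $ is identified with the functor induced by postcomposition with the $2$-natural transformation $\B (\lambda -, X ) : \B (D'-, X)\to \B (D-, X) $. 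Since each $\lambda _C $ is a lax epimorphism, each of its components $\B (\lambda _C , X )$ is fully faithful, so this $2$-natural transformation is \emph{pointwise fully faithful}.

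The theorem thus reduces to the following lemma about weighted limits in $\Cat $: for any weight $W : \S^\op \to \Cat $ and any pointwise fully faithful $2$-natural transformation $\mu : F\to G: \S^\op \to \Cat $, the induced functor $\mu _\ast : \left[\S^\op , \Cat \right](W, F)\to \left[\S^\op , \Cat \right](W, G) $ is fully faithful. Recall that objects of $\left[\S^\op , \Cat \right](W, F) $ are $2$-natural transformations $W\to F$ and morphisms are modifications. Given $\alpha, \beta : W\to F$ and a modification $\Delta : \mu\alpha \leadsto \mu\beta $, the fully faithfulness of each $\mu _C $ allows us to lift every component $\Delta _C : \mu _C \alpha _C \Rightarrow \mu _C \beta _C $ uniquely to a natural transformation $\Gamma _C : \alpha _C \Rightarrow \beta _C $; the resulting family $\Gamma $ is then shown to be a modification by applying the faithfulness of $\mu _D $ (for each $D\in \S $) to the modification equation satisfied by $\Delta $. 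Faithfulness of $\mu _\ast $ is immediate from the same pointwise argument.

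The only delicate point is verifying that the lifts $\Gamma _C $ assemble into a genuine modification; this is a routine pointwise diagram chase using the faithfulness of each $\mu _C $. The rest of the argument is purely structural, combining the representable reformulation with the $\Cat $-level lemma; no additional hypotheses on $\S $, $W$, or $\B $ are required beyond the existence of the colimits in question.
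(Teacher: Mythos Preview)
Your proof is correct and follows essentially the same route as the paper: reduce via the universal property of weighted colimits to showing that postcomposition with the pointwise fully faithful $2$-natural transformation $\B(\lambda-,X)$ induces a fully faithful functor on hom-categories. The only difference is packaging: the paper invokes the dual of its earlier Proposition~\ref{p:pointwise-implies-global-laxepi} (pointwise fully faithful $\Rightarrow$ fully faithful as a morphism in the functor $2$-category, whence $\tCat[\S^{\op},\Cat](W,-)$ applied to it is fully faithful by definition), whereas you prove that same fact directly by the componentwise modification-lifting argument---which is exactly the dual of the paper's proof of that proposition.
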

\begin{proof}
In fact, if the $2$-natural transformation $\lambda:D\to D':\S\to \B$ is a pointwise lax epimorphism, then, for any $A\in\B $, the $2$-natural transformation
$$\B \left( \lambda , A  \right) : \B \left(  D'- , A \right)\rightarrow\B \left(  D - , A \right),  $$
pointwise defined by
$\B \left( \lambda , A\right) _ C = \B \left( \lambda _ C, A \right) $,
is pointwise fully faithful. Hence it is fully faithful in the 2-category $\Cat[\S,\B]$ (dual of Proposition \ref{p:pointwise-implies-global-laxepi}).
Therefore, for any weight $W:\S^{\op}\to \Cat$ and $X\in\B $,
$$ \B\left( \colim \left( W, \lambda  \right), X  \right)\cong \tCat\left[ \S , \B \right]\left( W, \B \left( \lambda , A\right) \right)  $$
is fully faithful. This proves that $\colim \left( W, \lambda  \right)$ is a lax epimorphism in $\B $.
\end{proof}

\begin{remark}\label{r:closednessEM} As shown in \cite{bousfield}, see also \cite{FK}, for any  orthogonal $(\ce,\cm)$-factorization system in an ordinary category, $\ce$ and $\cm$ are closed under, respectively,  colimits and limits.

In $\Cat $, lax epimorphisms are not closed under ($2$-dimensional) limits, fully faithful functors are not closed under ($2$-dimensional) colimits, and, moreover, equivalences are neither closed under limits nor colimits.

Indeed, consider the category $\nabla \mathsf{2} $ with two objects and one isomorphism between them. Let $d^0$ and $d^1$ be the two possible inclusions $\mathsf{1}\to\nabla \mathsf{2} $ of
the terminal category in $\nabla\mathsf{2} $.
There is only one $2$-natural transformation $\iota $ between the diagram
\begin{equation*}
\begin{tikzpicture}[x=3cm, y=0.7cm]
\node (b) at (1, 0) { $ \mathsf{1}  $};
\node (c) at (2,0) {$ \nabla\mathsf{2} $};
\node (c1) at (1.9,0.2) {};
\node (c2) at (1.9,-0.2) {};
\node (b1) at (1.1,0.2) {};
\node (b2) at (1.1,-0.2) {};
\draw[->] (b1)--(c1) node[midway,above] {$ d^0 $};
\draw[->] (b2)--(c2) node[midway,below] {$ d^1 $};
\end{tikzpicture}
\end{equation*}
and the terminal diagram $\xymatrix{\mathsf{1} \ar@<+0.5ex>[r] \ar@<-0.5ex>[r] & \mathsf{1} .} $

Clearly, $\iota $ is a pointwise equivalence (and, hence, a pointwise lax epimorphism and fully faithful functor).
However, the induced functor between the equalizers and the coequalizers
are respectively
\begin{equation*}
    \overline{\iota} : \emptyset\to \mathsf{1} \qquad\mbox{and}\qquad \underline{\iota} :\Sigma\mathbb{Z}\to\mathsf{1}
\end{equation*}
in which $\Sigma\mathbb{Z}$ is  just the group $\left( \mathbb{Z} , + , 0\right) $ seen as a category with only one object. The functor $\overline{\iota}$ is not a lax epimorphism, while $\underline{\iota}$ is not fully faithful. Hence, neither functor is an equivalence.

Therefore, equivalences may not be the left or the right class of a (strict) orthogonal factorization system in a 2-category with reasonable (co)limits.
\end{remark}

The closedness under colimits has several nice consequences. We indicate three of them, which are going to be useful in the proof of Theorem \ref{t:ofs} below.

\begin{lemma}\label{l:push} (cf. \cite{IK}). Lax epimorphisms are stable under pushouts and cointersections. Moreover, the multiple coequalizer of a family of morphisms  equalized by a lax epimorphism  is  a lax epimorphism.
\end{lemma}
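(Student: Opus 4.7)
The plan is to deduce all three statements from Theorem~\ref{t:stcolim} by exhibiting the relevant morphism as $\colim(\lambda)$ for a $2$-natural transformation $\lambda\colon D_2\to D_1$ between two diagrams of a common shape whose components are all lax epimorphisms. The recurring device is to take $D_1$ to be the diagram whose colimit is the codomain of the morphism in question, and to take $D_2$ to be a ``collapsed'' version of $D_1$ in which each given lax epimorphism is replaced by an identity (or each family of parallel arrows is replaced by a single morphism). The components of $\lambda$ are then either identities, which are lax epimorphisms by Example~\ref{es:lax-epi}~(\ref{lax-epi:equi}), or the given lax epimorphism(s).

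For pushouts, given a lax epimorphism $e\colon A\to B$ and a morphism $f\colon A\to C$ with pushout leg $h\colon C\to P$, I would take $\S$ to be the span shape. Let $D_1$ send it to $(A\xrightarrow{e} B,\, A\xrightarrow{f} C)$, whose colimit is $P$, and let $D_2$ send it to $(A\xrightarrow{\id_A} A,\, A\xrightarrow{f} C)$, whose colimit is $C$. The natural transformation $\lambda\colon D_2\to D_1$ with components $\id_A$, $e$, $\id_C$ is pointwise a lax epimorphism, and its induced morphism on colimits is $h$. For the cointersection $c\colon A\to C$ of a family $\{e_i\colon A\to B_i\}_i$ of lax epimorphisms, the same device applies with the span replaced by the wide span and $D_2$ chosen constant at $A$; the components of $\lambda$ are then $\id_A$ at the apex and $e_i$ on each leg.

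For the multiple coequalizer of a family $\{f_i\colon X\to Y\}_i$ equalized by a lax epimorphism $e\colon W\to X$, set $h := f_i\cdot e$, which is independent of $i$ by hypothesis. Take $\S$ to be the category with two objects $0, 1$ and one arrow $0\to 1$ for each index $i$. Let $D_1$ send the shape to $(X, Y, f_i)$, so that $\colim(D_1) = Q$ is the multiple coequalizer; let $D_2$ send it to $(W, Y, h)$, with every parallel arrow mapping to the same morphism $h$, so that $\colim(D_2) = Y$. The natural transformation $\lambda\colon D_2\to D_1$ with components $e$ and $\id_Y$ is pointwise a lax epimorphism, and its induced morphism on colimits is the coequalizer $q\colon Y\to Q$.

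In each case, Theorem~\ref{t:stcolim} then yields that the induced morphism is a lax epimorphism. The main obstacle, I expect, is the multiple coequalizer case: the hypothesis ``equalized by a lax epimorphism'' must be used to collapse the entire family of parallel arrows into a single morphism, whereas for pushouts and cointersections the given lax epimorphism is simply replaced by an identity. The remaining verifications are routine---the naturality of $\lambda$ and the identification of its induced morphism on colimits---both obtained by precomposing with the colimit inclusions and invoking the uniqueness property of colimit-induced maps.
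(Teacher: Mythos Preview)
Your proof is correct and follows essentially the same approach as the paper's: in each of the three cases the paper likewise exhibits the morphism in question as the map induced on (conical) colimits by a $2$-natural transformation whose components are identities together with the given lax epimorphism(s), and then invokes Theorem~\ref{t:stcolim}. The only difference is presentational---the paper draws the relevant diagrams explicitly rather than describing $D_1$, $D_2$, and $\lambda$ in words---but the underlying constructions coincide.
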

\begin{proof}
\begin{enumerate}
    \item
    \label{push1}
 Let the two squares in the following picture be pushouts:
$$\xymatrix@=1.2em{\bullet\ar@{=}[rrr]\ar[ddd]_g\ar@{.>}[rd]_{\id\!\!\!\!}&&&\bullet\ar[ddd]^g\ar@{.>}[ld]^{\!\!\!\!\!\!f}\\
&\bullet \ar[r]^f\ar[d]_g&\bullet \ar[d]^{g'}&\\
&\bullet \ar[r]_{f'}&\bullet&\\\bullet\ar@{.>}[ru]^{\id\!\!\!\!}\ar@{=}[rrr]&&&\bullet\ar@{-->}[lu]_{\!\!\!\!\!f'}}$$
Then, the dotted arrows form a 2-natural transformation between the corresponding origin  diagrams, and the dashed arrow is the unique one induced by the universality of the inner square. From Theorem \ref{t:stcolim}, if $f$  is a lax epimorphism, so is $f'$.
In conclusion, lax epimorphisms   are stable under pushouts.

\item
  \label{push2}
  Analogously, we see that the cointersection $e:A\to E$ of a family $e_i:A\to E_i$ of lax epimorphisms  is a lax epimorphism.  $$\xymatrix@=1.2em{&&A\ar@{-->}[ddd]^e\\A\ar@{=}[r]\ar@<1ex>@{=}[rru]\ar@{.>}[d]_{\id}&A\ar@{=}[ru]\ar@{.>}[d]^{e_i}&\\
A\ar[r]^{e_i}\ar[rrd]_e&E_i \ar[dr]&\\
&&E}$$

\item
  \label{push3}
 Let $f_i:B\to C$ be a family of morphisms  equalized by a lax epimorphism  $e$, i.e., $f_ie=f_je$ for all $f_i$ and $f_j$ of the family. Then, the closedness under colimits ensures that $c$ is a lax epimorphism, as illustrated by the diagram:
$$\xymatrix@=1.2em{E\ar[r]^{f_ie}\ar@{.>}[d]_e&B\ar@{=}[r]\ar@{.>}[d]^{\id}&B\ar@{-->}[d]^c\\
A\ar[r]_{f_i}&B\ar[r]_c&C}$$
\end{enumerate}
\end{proof}

\begin{remark}
Many of everyday categories are cowellpowered, that is, the family of epimorphisms with a same domain is essentially small.
By contrast, in the ``mother" of all 2-categories, $\Cat$, the class of lax epimorphisms  is not cowellpowered: For every cardinal $n$, let $A_n$ denote the category whose objects are $a_i$, $i\in n$, and whose morphisms are $f_{ij}:a_i\to a_j$ with $f_{jk}f_{ij}=f_{ik}$ and $f_{ii}=1_{a_i}$ for $i,j,k\in n$. Every  inclusion functor  $E_n:A_0\to A_n$, being an equivalence, is a lax epimorphism,   but the family of all these $E_n$ is a proper class. Moreover, the family $E_n$, $n\in \Card$, fails to have a cointersection in $\Cat$. However, $\Cat$ is almost cowellpowered in the sense of Definition \ref{d:almost} as shown in the next section.
\end{remark}

\begin{definition}\label{d:almost}
   Let $\ce$ be a class of 1-cells in a 2-category  $\A$. Given a morphism $f:A\to B$, denote by $\Ef$ the category whose objects     are factorizations $A\xrightarrow{d}D\xrightarrow{p}B$ of $f$ with $d\in \ce$, and whose morphisms $u:(d,D,p)\to (e,E,m)$ are 1-cells $u:D\to E$ with $ud=e$ and $md=p$. We say that  {\em $\A$ is almost cowellpowered \wrt $\ce$}, if $\Ef$  has a weakly terminal set  for every morphism $f$.
\end{definition}

The closedness of lax epimorphisms  under colimits allows to obtain the following theorem, whose proof makes use of a standard argumentation for the General Adjoint Functor Theorem.

\begin{theorem}\label{t:ofs} Let the 2-category  $\A$ have conical colimits and be almost cowellpowered \wrt lax epimorphisms.   Then $\A$ has an orthogonal $(LaxEpi, LaxStrongMono)$-factorization system.
\end{theorem}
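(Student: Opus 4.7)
The plan is to construct, for each $f:A\to B$, a factorization $f=pd$ with $d$ a lax epimorphism and $p$ a lax strong monomorphism, by building a terminal object in the category $\Ef$ of lax-epi factorizations of $f$. The two-dimensional part of condition (ii) in Definition \ref{d:ofs} will come for free from Remark \ref{r:diag2}(2), since the left class consists of lax epimorphisms.

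By almost cowellpoweredness, choose a weakly terminal set $\{(d_i,D_i,p_i)\}_{i\in I}$ in $\Ef$. Form the cointersection of $\{d_i:A\to D_i\}_{i\in I}$ in $\A$, obtaining a lax epimorphism $d_0:A\to D_0$ (Lemma \ref{l:push}, part 2) together with morphisms $u_i:D_i\to D_0$ satisfying $u_id_i=d_0$; the equalities $p_id_i=f$ induce a unique $p_0:D_0\to B$ with $p_0u_i=p_i$, and hence $p_0d_0=f$. Using weak terminality of the chosen set, $(d_0,D_0,p_0)$ is at least weakly terminal in $\Ef$.

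To promote $(d_0,D_0,p_0)$ to a terminal object of $\Ef$, I would follow a General-Adjoint-Functor-Theorem-style argument: take the joint coequalizer $c:D_0\to D$ (in $\A$) of all parallel pairs of morphisms in $\Ef$ from some $(d_i,D_i,p_i)$ to $(d_0,D_0,p_0)$. Each such parallel pair $\phi_1,\phi_2:D_i\rightrightarrows D_0$ is equalized by the lax epimorphism $d_i$, so Lemma \ref{l:push} (part 3, combined with closure under cointersections) guarantees that $c$ itself is a lax epimorphism. Setting $d:=cd_0$ and inducing $p:D\to B$ via $pc=p_0$ (well defined because $p_0$ coequalizes every such pair) yields $(d,D,p)\in\Ef$, which combining the coequalizer property with weak terminality is terminal in $\Ef$.

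With a terminal $(d,D,p)$ in hand, $p$ is shown to be a lax strong monomorphism as follows. Given a commutative square $pu=ve'$ with $e'$ a lax epimorphism, form the pushout of $u$ and $e'$ in $\A$, with legs $u':D\to P$ and $v':Y\to P$; by Lemma \ref{l:push} (part 1), $u'$ is a lax epimorphism. The pushout induces $w:P\to B$ with $wu'=p$ and $wv'=v$, so $(u'd,P,w)\in\Ef$. Terminality of $(d,D,p)$ produces a unique $\psi:P\to D$ with $\psi u'd=d$ and $p\psi=w$. Set $t:=\psi v'$; then $pt=wv'=v$, and $te'=\psi u'u=u$, the last step because $\psi u':D\to D$ is an endomorphism of $(d,D,p)$ in $\Ef$, hence the identity by terminality. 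Uniqueness of $t$ is analogous: any other candidate $t'$ factors through the pushout as a $\psi':P\to D$ which is also a morphism $(u'd,P,w)\to(d,D,p)$ in $\Ef$, so $\psi'=\psi$ and $t'=t$.

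The main obstacle is the promotion of the weakly terminal $(d_0,D_0,p_0)$ to a strictly terminal object of $\Ef$: in the $2$-categorical setting the lax epi property only produces an invertible $2$-cell between parallel morphisms with equal postcomposites, not literal equality, so the coequalizer rigidification step has to be executed carefully. The key ingredient is Lemma \ref{l:push} (part 3), which keeps the rigidifying coequalizer inside the class of lax epimorphisms so that the resulting object still lives in $\Ef$.
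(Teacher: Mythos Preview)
Your proposal is correct and follows the paper's approach: cointersection to get a weakly terminal object in $\Ef$, a coequalizer to rigidify it to a terminal object (the paper coequalizes the endomorphisms of the weakly terminal object rather than your parallel pairs from the $(d_i,D_i,p_i)$, and proves uniqueness of $t$ via a further coequalizer rather than by factoring $t'$ through the pushout, but these are immaterial variants), then terminality plus a pushout to establish the lax strong monomorphism property. Your closing worry about $2$-cells is misplaced --- the rigidification step is purely $1$-categorical, and the $2$-dimensional content is entirely absorbed by Remark~\ref{r:diag2}(2), exactly as you noted at the outset.
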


\begin{proof} Let $\ce$ be the class of lax epimorphisms  in $\A$. Given a morphism $f:A\to B$, let $\{(e_i,E_i,m_i)\, |\, i\in I\}$ be a weakly terminal object    of the category  $\Ef$; that is, for every factorization $A\xrightarrow{d}D\xrightarrow{p}B$ of $f$ with $d\in \ce$ there is some $i$ and some morphism $u:(d,D,p)\to (e_i,E_i,m_i)$. Take  the cointersection $e:A\to E$ of all $e_i:A\to E_i$. By Lemma \ref{l:push}, the morphism $e$ belongs to $\ce$;
$$\xymatrix{A\ar[rd]_e\ar[r]^{e_i}&E_i\ar[d]^{t_i}\ar[r]^{m_i}&B\\
&E\ar[ru]_m&}$$
moreover, the cointersection gives rise to a unique $m:E\to B$ with $me=f$. Thus,  $(e,E,m)$ is clearly a weakly terminal object    of $\Ef$.

Consider all $s:E\to E$ forming a morphism $s:(e,E,m)\to (e,E,m)$ in $\Ef$. Let $c:E\to C$ be the multiple coequalizer of the family of all these morphisms $s:E\to E$. By Lemma \ref{l:push}, $c$ is a lax epimorphism.  Since $1_E$ is one of those morphisms $s$, and $ms=m$ for all of them, the universality of $c$ gives a unique  $n:C\to B$ with $nc=m$. It is easy to see that, $c:(e,E,m)\to (ce, C, n)$ is also  the coequalizer in $\Ef$ of all the above morphisms $s$. Since lax epimorphisms are closed under composition, $ce$ belongs to $\Ef$, hence,  $(ce, C, n)$ is  a terminal object     of $\Ef$ (cf. e.g. \cite{MacLane}, Ch.V, Sec.6).

We show that $n:C\to B$  is a lax strong monomorphism. In the following diagram, let the outer square be commutative with $q\in \ce$;  form the pushout $(\bar{q},\bar{r})$ of $q$ along $r$, and let $w$ be the unique morphism with $w\bar{q}=n$ and $w\bar{r}=s$:
\begin{equation}\label{squarep}\xymatrix@=1em{P\ar[rr]^{q}\ar[dd]_r&&Q\ar[ld]_{\bar{r}}\ar[dd]^s\\ &R\ar[rd]^w&\\
     C\ar[ru]^{\bar{q}}\ar[rr]_n&&B}
     \end{equation}
     The closedness under colimits of lax epimorphisms  ensures that $\bar{q}$ is a lax epimorphism (Lemma \ref{l:push}),   so $(\bar{q}
ce, R,w)\in \Ef$. Since $(ce,C,n)$ is terminal, there is a unique $u:R\to C$ forming a morphism in $\Ef$ from $(\bar{q}
ce, R,w)$ to $(ce,C,n)$, and it makes $u\bar{q}:C\to C$ an endomorphism on $(ce,C,n)$, then $u\bar{q}=1_C$. The morphism
$t=u\bar{r}$ fulfils the equalities $tq=r$ and $nt=s$. Moreover $t$ is unique; indeed, if  $t'$ is another morphism fulfilling the same equalities,  let $k$ be the  coequalizer of $t$ and $t'$ and let $p:K\to B$ be  such that  $pk=n$. Again by Lemma \ref{l:push}, $(kce,K,p)$ belongs to $\Ef$. Arguing as before for $\bar{q}$, we conclude that $k$ is a split monomorphism   and, then, $t=t'$.

Taking into account Remark \ref{r:diag2}, we conclude that we have indeed an orthogonal factorization system in the 2-category $\A$. \end{proof}

\begin{remark}
In \cite{DV}, an orthogonal factorization system $(\ce,\cm)$ which has $\ce$ made of lax epimorphisms  and $\cm$ made of faithful morphisms is said to be {\em (1,2)-proper}. By Proposition \ref{p:inserter}, this is the case for the $(LaxEpi, LaxStrongMono)$ factorization system. 
\end{remark}

\begin{examples}
Some of the well-known orthogonal factorization systems in ordinary categories are indeed of the $(LaxEpi, LaxStrongMono)$ type for convenient 2-cells. This is the case in the 2-categories
$\Pos$ and $\Grp$. In $\Pos$ it is the usual orthogonal \textit{(Surjections, Order-embeddings)}-factorization system.  Analogously for the category $\TTop_0$ of $T_0$-topological spaces and continuous maps, with 2-cells given by the pointwise  specialization order, we obtain \textit{(Surjections, Embeddings)}.
For the 2-category $\Grp$, the $(LaxEpi, LaxStrongMono)$ factorization is precisely the $(RegEpi,Mono)$ one.
\end{examples}

Recall that, for every category with an orthogonal factorization system $(\ce,\cm)$, we have that $\cm=\ce^{\downarrow}$, i.e., $\cm$ consists of all morphisms $m$ fulfilling the diagonal fill-in property as in \eqref{square} of Definition \ref{laxstrong}. From the proof of Theorem \ref{t:ofs},  it immediately follows  that, more generally, we have the following:

\begin{corollary}
Let $\ce$ be a class of morphisms closed under  post-composition with  isomorphisms in a cocomplete  category $\ca$. Then, $(\ce, \ce^{\downarrow})$ forms an orthogonal factorization system if and only if $\ca$ is almost cowellpowered with respect to $\ce$ and $\ce$ is closed under composition and under colimits.
\end{corollary}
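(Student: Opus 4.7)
The plan is to handle the two directions separately, starting with the nontrivial sufficiency direction, which essentially re-runs the argument of Theorem \ref{t:ofs} with $\ce$ abstract in place of the class of lax epimorphisms. Given $f : A \to B$ in $\ca$, I would first pick a weakly terminal set $\{(e_i, E_i, m_i)\}_{i \in I}$ of $\Ef$ (by almost cowellpoweredness), form the cointersection $e : A \to E$ of the family $\{e_i\}$, and obtain $m : E \to B$ with $me = f$ making $(e, E, m)$ weakly terminal in $\Ef$; closure under colimits gives $e \in \ce$. Next I would take the multiple coequalizer $c : E \to C$ of all endomorphisms of $(e, E, m)$ in $\Ef$, producing $n : C \to B$ with $nc = m$; the dualized General Adjoint Functor Theorem construction used in the proof of Theorem \ref{t:ofs} then makes $(ce, C, n)$ a terminal object of $\Ef$, and $ce \in \ce$ by closure under composition and colimits. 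To verify $n \in \ce^{\downarrow}$, I would repeat the pushout argument in the proof of Theorem \ref{t:ofs}: for any commutative square with left leg $q \in \ce$, form the pushout of $q$ along the left side, use closure under colimits to keep the pushout leg in $\ce$, and exploit terminality of $(ce, C, n)$ to extract the required diagonal; uniqueness of the diagonal follows from the coequalizer argument at the end of that proof. Since we are in a 1-category, the two-dimensional concerns of Remark \ref{r:diag2} are vacuous, and closure of $\ce^{\downarrow}$ under pre-composition with isomorphisms is a routine check.

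For the necessity direction, assume $(\ce, \ce^{\downarrow})$ is an orthogonal factorization system. Almost cowellpoweredness is immediate: the canonical factorization $f = m_0 e_0$ provides a singleton weakly terminal set $\{(e_0, E_0, m_0)\}$ in $\Ef$, since for any $(d, D, p) \in \Ef$ the identity $pd = m_0 e_0$ with $d \in \ce$ and $m_0 \in \ce^{\downarrow}$ yields, via the diagonal fill-in, a morphism $(d, D, p) \to (e_0, E_0, m_0)$. Closure of $\ce$ under composition follows from a twofold application of the diagonal fill-in combined with the identification $\ce = {}^{\perp}(\ce^{\downarrow})$ (itself a standard consequence of the OFS axioms, obtained by factoring any $f \in {}^{\perp}(\ce^{\downarrow})$ and using the diagonal fill-in to show the right factor is an isomorphism). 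Closure of $\ce$ under colimits is the classical Bousfield result (as recorded in Remark \ref{r:closednessEM}): for any diagram $\lambda$ of morphisms in $\ce$ in the arrow category, the induced morphism between colimits lies in ${}^{\perp}(\ce^{\downarrow}) = \ce$, by pasting the diagonal-fill-in square with the universal property of the colimits on both sides.

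I expect the main obstacle to be the uniqueness half of the diagonal fill-in in the sufficiency direction; everything else is either a direct invocation of a hypothesis, a one-line isomorphism verification, or a textbook fact about orthogonal factorization systems. That uniqueness argument requires a genuinely substantive step, namely the coequalizer construction at the very end of the proof of Theorem \ref{t:ofs}, but it transports verbatim to the present setting because it relies only on closure of $\ce$ under composition and colimits and on terminality of the distinguished factorization in $\Ef$.
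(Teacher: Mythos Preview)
Your proposal is correct and follows essentially the same route as the paper's own proof: for sufficiency you replay the construction of Theorem \ref{t:ofs} with an abstract $\ce$ (cointersection, multiple coequalizer, pushout argument for the diagonal), and for necessity you observe that the $(\ce,\ce^{\downarrow})$-factorization of $f$ furnishes a (weakly) terminal object of $\Ef$ and then invoke the standard Bousfield/Freyd--Kelly facts recorded in Remark \ref{r:closednessEM}. The paper's proof is terser but structurally identical; your added detail (the explicit identification $\ce = {}^{\perp}(\ce^{\downarrow})$, the remark that the 2-dimensional part of Remark \ref{r:diag2} is vacuous in a 1-category) is accurate and harmless.
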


\begin{proof}
Following the proof of Theorem \ref{t:ofs}, we see that, if $\ce$ is a class of morphisms closed under composition and under colimits, and  $\ca$ is almost cowellpowered with respect to $\ce$, then  $(\ce, \ce^{\downarrow})$ forms an orthogonal factorization system. Conversely,  a category with an orthogonal $(\ce,\cm)$-factorization system is almost cowellpowered with respect to $\ce$: the $(\ce,\cm)$-factorization of $f:A\to B$ is indeed a terminal object of $\Ef$. The closedness of $\ce$ under composition and under colimits  is a well-known fact for orthogonal factorization systems.
\end{proof}

\section{The Lax Epi-factorization  in $\Cat$}

We describe the orthogonal $(LaxEpi,LaxStrongMono)$-factorization system  in the 2-category $\Cat$ of small categories, functors and natural transformations.  Everything  we do in this section applies also to the bigger universe $\CAT$ of possibly large (locally small) categories.

Let us recall, by the way,  two
 well-known orthogonal factorization systems $(\ce,\cm)$ in the category $\Cat$:

\begin{enumerate}
\item[(a)] $\ce$  consists of all functors bijective on objects and $\cm$ consists of all fully faithful functors.

\item[(b)] $\ce$ consists of all initial functors and $\cm$ consists of all discrete opfibrations; analogously, for final functors and discrete fibrations~\cite{SW}.
\end{enumerate}

It is easy to see that in both cases, (a) and (b), the system $(\ce,\cm)$ fulfils the two-dimensional aspect of the fill-in diagonal property, thus we have an orthogonal factorization system in the 2-category $\Cat$ as defined in \ref{d:ofs}.



\vskip3mm

We recall a characterization of the lax epimorphisms
in the 2-category $\Cat$ of small categories, functors and natural transformations presented in \cite{AeBSV}.

Given a functor $F:A\to B$ and a morphism $g:b\to c$ in $B$, let $$g\dd F$$ denote the category whose objects are triples $(h,a,k)$ such that  the composition $b\xrightarrow{h}Fa\xrightarrow{k}c$ is equal to $g$, and whose morphisms $f:(h,a,k)\to (h',a',k')$ are those $f:a\to a'$ of $A$ with $Fa\cdot h=h'$ and $k'\cdot Fa=k$. Then:

\begin{theorem}\label{t:absv} {\em \cite{AeBSV}} A functor $F:A\to B$  is a lax epimorphism in $\Cat $ if and only if, for every morphism $g$ of $B$, the category $g\dd F$ is connected.
\end{theorem}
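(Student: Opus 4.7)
The plan is to prove the two directions separately, the easy one being a direct construction and the hard one relying on a Kan extension / representability trick.

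For $(\Leftarrow)$, I would fix $G, H : B \to C$ and a natural transformation $\gamma : GF \to HF$, and construct the unique lift $\bar\gamma : G \to H$ with $\bar\gamma F = \gamma$ by an explicit pointwise formula. For each morphism $g : b \to c$ in $B$ and each $(h, a, k) \in g \dd F$, set
\[
\psi(h, a, k) \,:=\, H(k)\circ \gamma _ a\circ G(h)\colon Gb \longrightarrow Hc.
\]
The naturality of $\gamma$ shows that $\psi(h, a, k)$ is invariant under the morphisms of $g\dd F$: if $f: a\to a'$ in $A$ satisfies $Ff\circ h = h'$ and $k'\circ Ff = k$, then
\[
H(k')\gamma_{a'}G(h') \,=\, H(k')\circ HF(f)\circ \gamma_a \circ G(h) \,=\, H(k'\circ Ff)\gamma_a G(h) \,=\, H(k)\gamma_a G(h).
\]
Since $g\dd F$ is connected and (hence) nonempty, $\psi$ descends to a single morphism $\tilde\gamma(g) : Gb \to Hc$ depending only on $g$. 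Setting $\bar\gamma_b := \tilde\gamma(1_b)$, naturality at $g : b \to c$ follows because both $H(g)\bar\gamma_b$ and $\bar\gamma_c G(g)$ can be rewritten as $\tilde\gamma(g)$, using the objects $(h', a', gk') \in g\dd F$ (from $(h', a', k') \in 1_b\dd F$) and $(h''g, a'', k'') \in g\dd F$ (from $(h'', a'', k'') \in 1_c\dd F$). The identity $\bar\gamma F = \gamma$ uses the object $(1_{Fa}, a, 1_{Fa}) \in 1_{Fa}\dd F$, and uniqueness is routine: naturality of any lift forces $\bar\gamma_b = H(k)\gamma_a G(h)$ for any choice of $(h, a, k)\in 1_b\dd F$.

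For the converse $(\Rightarrow)$, I would identify $\pi_0(g\dd F)$ as the fibre over $g$ of a comparison map of $\Set$-valued functors on $B$. Concretely, consider
\[
P(x) \,:=\, \int^{a \in A} B(b, Fa) \times B(Fa, x),
\]
equipped with the natural transformation $\phi : P \to B(b, -)$ sending a class $[h, a, k]$ to $k \circ h$. A direct coend computation shows that the defining relation of the coend matches exactly the morphisms of $g\dd F$, so $\phi_x^{-1}(g) \,=\, \pi_0(g \dd F)$ for every $g : b \to x$; hence it suffices to prove that $\phi$ is an isomorphism. The functor $P$ is the left Kan extension $\Lan_F B(b, F-)$, so carries a canonical unit $\eta : B(b, F-) \to PF$, and an easy check gives $\phi F \circ \eta = \id_{B(b, F-)}$. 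Because $F$ is a lax epimorphism, precomposition with $F$ is fully faithful on $\Cat[B,\Set]$, so $\eta$ lifts uniquely to some $\psi : B(b, -) \to P$ with $\psi F = \eta$. Faithfulness of the restriction then yields $\phi \circ \psi = \id_{B(b, -)}$ (comparing both sides after restricting to $A$ gives $\id_{B(b,F-)}$), while the uniqueness clause of the universal property of $P=\Lan_F B(b,F-)$ gives $\psi \circ \phi = \id_P$ (both sides compose with $\eta$ to give $\eta$). Hence $\phi$ is pointwise bijective, so every $\pi_0(g\dd F)$ is a singleton.

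The main obstacle is in $(\Rightarrow)$: an abstract representability argument easily yields an isomorphism $P \cong B(b, -)$, but what the statement really requires is that the \emph{specific} comparison map $\phi$ is this isomorphism, since connectedness of $g\dd F$ is extracted from the fibres of $\phi$. The trick of lifting the unit $\eta$ via the lax-epi hypothesis and then exploiting uniqueness on both the Kan extension and the lax-epi side is what closes this gap. The $(\Leftarrow)$ direction, by contrast, is a bookkeeping exercise whose only subtlety lies in verifying naturality of $\bar\gamma$ from the pointwise formula.
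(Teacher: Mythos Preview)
The paper does not give its own proof of this theorem: it is quoted from \cite{AeBSV} and used as input for the description of the $(LaxEpi,LaxStrongMono)$-factorization in $\Cat$. So there is no in-paper argument to compare against directly.

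That said, your proof is correct. The $(\Leftarrow)$ direction is the standard direct construction and matches what one finds in \cite{AeBSV}. Your $(\Rightarrow)$ direction is more conceptual than the original combinatorial argument: you recognise $\pi_0(g\dd F)$ as the fibre of the canonical comparison $\Lan_F B(b,F-)\to B(b,-)$ and then use the lax-epi hypothesis together with the universal property of the Kan extension to invert that comparison. This is essentially the $\cv=\Set$ instance of the equivalence \eqref{(h)t:V-lax}$\Leftrightarrow$\eqref{(e)t:V-lax} of Theorem~\ref{t:V-lax}, read through the coend formula for $\Lan_F$; it is a nice route because it explains \emph{why} connectedness of $g\dd F$ is the right condition, rather than verifying it by hand.

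One point to tighten: when you invoke ``precomposition with $F$ is fully faithful on $\Cat[B,\Set]$'', you are applying the lax-epi hypothesis with the large target $\Set$, whereas the definition only guarantees it for small $\cc$. This is harmless---restrict to the small full subcategory of $\Set$ generated by the values of $P$ and $B(b,-)$, exactly as in the proof of \eqref{(h)t:V-lax}$\Rightarrow$\eqref{(i)t:V-lax} in Theorem~\ref{t:V-lax}---but it deserves a sentence.
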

\vskip3mm

We start by defining discrete splitting bifibrations. We will see that they are precisely the lax strong monomorphisms.

\begin{notation}\label{n:t.C}For a  functor $P:A\to B$  and every decomposition of a morphism $g$ of the form $b\xrightarrow{r}Pe\xrightarrow{s}c$, we denote by $[(r,s)]$ the corresponding connected component in the category $g\dd P$.  By composing  a morphism $t:d\to b$ with $C=[(r,s)]$ we obtain $C\cdot t=[(rt,s)]$, a connected component of $tg\dd P$. Analogously, for the composition on the
 right hand side: for $u:b\to c$, $u\cdot C=[(h,uk)]$.
 \end{notation}

 \begin{definition}\label{d:discrete} Let $P:E\to B$ be a functor.
 \begin{enumerate}
     \item[(a)] A {\em $P$-split} consists of a factorization of an identity $1_b$ of the form
 $$\xymatrix{b\ar[r]^h\ar@/^-1.5pc/[rr]_{1_b}&Pe\ar[r]^k&b}$$
 with $[(1_{Pe},hk)]=[(hk,1_{Pe})]$.

\vskip1mm

 \item[(b)] A {\em $P$-split diagram} is a rectangle
 \begin{equation}\label{P-split}
     \xymatrix{b\ar[r]^h\ar[d]_g&Pe\ar[r]^k\ar@{~}[d]&b\ar[d]^g\\
 c\ar[r]_{h'}&Pe'\ar[r]_{k'}&c}
 \end{equation}
 where  $(h,k)$ and $(h',k')$ are $P$-splits  such that  $[(h,gk)]=[(h'g,k')]$ in $g\dd P$. The wavy line in the middle of the rectangle indicates the existence of an appropriate $P$-zig-zag between $(h,gk)$ and $(h'g,k')$; that is, the existence of a finite number of morphisms $h_i$, $k_i$, $f_i$  making   the following diagram commutative:
 $$\xymatrix{b\ar[r]^h\ar@{=}[d]&Pe\ar[d]^{Pf_0}\ar[r]^k&b\ar[d]^g\\
 b\ar[r]^{h_1}\ar@{=}[d]&Pe_1\ar[r]^{k_1}&c\ar@{=}[d]\\
 b\ar[r]^{h_2}\ar@{=}[d]&Pe_2\ar[u]_{Pf_1}\ar[d]^{Pf_2}\ar[r]^{k_2}&c\ar@{=}[d]\\
 b\ar[r]^{h_3}\ar@{=}[d]&Pe_3\ar@{.}[d]\ar[r]^{k_3}&c\ar@{=}[d]\\
 b\ar[r]^{h_n}\ar[d]_g&Pe_n\ar@{.}[d]\ar[r]^{k_n}&c\ar@{=}[d]\\
 c\ar[r]_{h'}&Pe'\ar[u]_{Pf_n}\ar[r]_{k'}&c}$$

\vskip1mm

 \item[(c)] The functor $P:E\to B$ is said to be a {\em discrete splitting bifibration} if, for every $P$-split diagram  \eqref{P-split}, there is a unique commutative rectangle in $E$ of the form
 \begin{equation}\label{split}
     \xymatrix{b_0\ar[r]^{h_0}\ar[d]_{g_0}&e\ar[r]^{k_0}&b_0\ar[d]^{g_0}\\
 c_0\ar[r]_{h'_0}&e'\ar[r]_{k'_0}&c_0}
 \end{equation}
 whose image by $P$ is the outer rectangle of \eqref{P-split}. (That is, $Px_0=x$, for each letter $x$ with $x_0$ appearing in \eqref{split}.)
  \end{enumerate}
 \end{definition}

 \begin{remark}\label{r:unique}
 If $P$ is a discrete splitting bifibration, then it is clear that, for every $P$-split of $1_b$,
 $$b\xrightarrow{h}Pe\xrightarrow{k}b$$
 there are unique morphisms $h_0:b_0\to e$ and $k_0:e\to b_0$  such that  $Ph_0=h$ and $Pk_0=k$.
 \end{remark}

 \begin{proposition}\label{p:faithful}
 Every discrete splitting bifibration
 \begin{enumerate}
     \item is faithful,
     \item is conservative, and
     \item reflects identities.
 \end{enumerate}
 \end{proposition}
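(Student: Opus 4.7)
The plan is to prove the three properties in the stated order, using (1) to bootstrap (2) and, in turn, using (2) in the proof of (3). In each case I would construct an appropriate $P$-split diagram and invoke the uniqueness clause in the definition of discrete splitting bifibration.

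For (1), let $f,g:e\to e'$ satisfy $Pf=Pg=\varphi$. I would consider the $P$-split diagram whose top row is the trivial split of $1_{Pe}$ through $e$, whose bottom row is the trivial split of $1_{Pe'}$ through $e'$, and whose vertical is $\varphi$. The connectedness requirement $[(1_{Pe},e,\varphi)]=[(\varphi,e',1_{Pe'})]$ in $\varphi\dd P$ is witnessed both by $f$ and by $g$. Taking $b_0=e$, $c_0=e'$, trivial top and bottom, and $g_0=f$ yields a valid lift in $E$, and so does the otherwise identical lift with $g_0=g$. Uniqueness forces $f=g$.

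For (2), let $Pf=\varphi$ be an isomorphism with inverse $\varphi^{-1}$. I would consider the $P$-split diagram with top row the split $Pe\xrightarrow{\varphi}Pe'\xrightarrow{\varphi^{-1}}Pe$ of $1_{Pe}$ through $e'$, bottom row the trivial split of $1_{Pe}$ through $e$, and vertical $1_{Pe}$; connectedness in $1_{Pe}\dd P$ is witnessed by $f$. From the unique lift $(b_0,c_0,g_0,h_0,k_0,h'_0,k'_0)$, the composites $k_0h_0$, $h_0k_0$, $k'_0h'_0$, $h'_0k'_0$ all have identity $P$-image, so by (1) they equal the relevant identities; in particular $b_0\cong e'$ via $(h_0,k_0)$ and $c_0\cong e$ via $(h'_0,k'_0)$. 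The composite $h'_0\, g_0\, k_0 : e'\to e$ then has $P$-image $\varphi^{-1}$, and pre- and post-composing it with $f$ yields morphisms whose $P$-images are identities, hence which are identities by (1). So $h'_0\, g_0\, k_0$ is a two-sided inverse of $f$.

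Finally, for (3), suppose $Pf=1_b$. By (2), $f$ is invertible, with some inverse $f^{-1}:e'\to e$. Consider the $P$-split diagram with both top and bottom rows trivial (through $e$ and $e'$, respectively) and vertical $1_b$. Two lifts are simultaneously valid: the tautological $(b_0,c_0,g_0,h_0,k_0,h'_0,k'_0)=(e,e',f,1_e,1_e,1_{e'},1_{e'})$, and the alternative $(e',e',1_{e'},f^{-1},f,1_{e'},1_{e'})$, whose outer rectangle commutes since $f\cdot f^{-1}=1_{e'}$. Uniqueness forces equality of the two tuples, whence $e=e'$ and $f=1_e$.

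The main bookkeeping obstacle is the verification of validity for each candidate lift---that the outer rectangle commutes in $E$ and that its $P$-image is the prescribed outer rectangle in $B$. Once (1) is in hand, however, morphisms with identity $P$-image on the same object are pinned down as identities, and the arguments for (2) and (3) become essentially formal manipulations.
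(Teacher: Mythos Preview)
Your proposal is correct and follows essentially the same strategy as the paper: for each item, build a $P$-split diagram with an obvious pair of lifts and invoke uniqueness. Part (1) is identical to the paper's argument. For (3) you use the diagram with trivial rows through $e$ and $e'$, while the paper puts both rows through the codomain; the two choices are interchangeable. The only real difference is in (2): the paper takes the more economical route of using trivial top and bottom rows (through $e'$ and $e$) with vertical $\varphi^{-1}$, so that the lift directly furnishes a morphism $t_0$ with $Pt_0=\varphi^{-1}$, and faithfulness finishes. Your construction, with the non-trivial top split $(\varphi,e',\varphi^{-1})$ and vertical $1_{Pe}$, works just as well but requires the extra step of extracting the inverse as the composite $h'_0\,g_0\,k_0$ after first showing (via faithfulness) that $h_0,k_0,h'_0,k'_0$ are isomorphisms. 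Either way the content is the same.
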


 \begin{proof}
 Let $P:E\to B$ be a discrete splitting bifibration.
 \begin{enumerate}
     \item For    $\xymatrix{a\ar@<0.3pc>[r]^f\ar@<-0.3pc>[r]_g&b}$ with $Pf=Pg=x$,  consider the    following  diagrams:
     $$\xymatrix{Pa\ar@{=}[r]\ar[d]_x&Pa\ar@{~>}[d]_{Pf}\ar@{=}[r]&Pa\ar[d]^x\\
    Pb\ar@{=}[r]&Pb\ar@{=}[r]&Pb}\qquad \quad \xymatrix{a\ar@{=}[r]\ar[d]_f&a\ar@{=}[r]&a\ar[d]^f\\
    b\ar@{=}[r]&b\ar@{=}[r]&b}\qquad \quad \xymatrix{a\ar@{=}[r]\ar[d]_g&a\ar@{=}[r]&a\ar[d]^g\\
    b\ar@{=}[r]&b\ar@{=}[r]&b}$$
    The first one is a $P$-split rectangle and it is the image by $P$ of the two last ones.  Then $f=g$.
 \item Let $f:a\to b$ be  such that  $Pf$ is an isomorphism in $B$. Then we have a $P$-split diagram:
 $$\xymatrix{Pb\ar[d]_{(Pf)^{-1}}\ar@{=}[r]&Pb\ar@{=}[r]&Pb\ar[d]^{(Pf)^{-1}}\\
    Pa\ar@{=}[r]&Pa\ar@{~>}[u]_{Pf}\ar@{=}[r]&Pa}$$
 Consequently, there is a unique $t_0:b\to a$ with $Pt_0=(Pf)^{-1}$. Since, by (1),  $P$ is faithful, $t_0$ is the inverse of $f$.
 \item Let $f:d\to e$ be  such that  $Pf=1_x$. By (2), $f$ is an isomorphism. Concerning the diagrams $$\xymatrix{x\ar@{=}[r]\ar@{=}[d]&Pe\ar@{~>}[d]^{P1_e}\ar@{=}[r]&x\ar@{=}[d]\\
    x\ar@{=}[r]&Pe\ar@{=}[r]&x}\qquad \quad \xymatrix{d\ar[r]^f\ar[d]_{1_d}&e\ar[r]^{f^{-1}}&d\ar[d]^{1_d}\\
    d\ar[r]^f&e\ar[r]^{f^{-1}}&d}\qquad \quad \xymatrix{d\ar[r]^f\ar[d]_{f}&e\ar[r]^{f^{-1}}&d\ar[d]^{f}\\
    e\ar[r]^{1_e}&e\ar[r]^{1_e}&e}$$
    the first one is a $P$-split rectangle which is the image by $P$ of the two rectangles on the right hand side. Consequently, $f=1_d$.
 \end{enumerate}
 \end{proof}

 \begin{theorem}\label{t:f-Cat}
 For $\ce$ the class of lax epimorphisms and $\cm$ the class of discrete splitting bifibrations,  $(\ce,\cm)$  is an orthogonal factorization system in $\Cat$ (and also in $\CAT$).
 \end{theorem}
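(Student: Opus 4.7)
The plan is to apply Theorem \ref{t:ofs} to $\Cat$ and then identify the right class of that factorization system with the class of discrete splitting bifibrations. First, $\Cat$ has all (conical) 2-colimits. Second, we must show almost cowellpoweredness of $\Cat$ with respect to lax epimorphisms: given a functor $F:X\to Y$, Theorem \ref{t:absv} bounds the size of any category $C$ appearing in a factorization $X\to C\to Y$ whose first leg is a lax epimorphism, since objects and morphisms of $C$ are constrained to be represented by connected components in slices of $X$. This yields a weakly terminal set of lax-epi factorizations of $F$, so Theorem \ref{t:ofs} provides an orthogonal $(\mathrm{LaxEpi},\mathrm{LaxStrongMono})$-factorization system in $\Cat$. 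The remaining work is thus the identification $\mathrm{LaxStrongMono} = \{\text{discrete splitting bifibrations}\}$.

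For the inclusion from discrete splitting bifibrations into lax strong monomorphisms, let $P:E\to B$ be a discrete splitting bifibration and consider a commutative square $P\cdot F = G\cdot J$ with $J:A\to C$ a lax epimorphism. The diagonal $T:C\to E$ is defined objectwise by choosing, for each $c\in C$, some factorization $c\xrightarrow{h}Ja\xrightarrow{k}c$ of $1_c$ through the image of $J$; this exists because $1_c\dd J$ is connected by Theorem \ref{t:absv}. The pair $(Gh,Gk)$ is a $P$-split of $1_{Gc}$, and its unique lifting (through $Fa$) provides the object $Tc$ together with its structural morphisms. On morphisms of $C$, a $P$-split diagram extracted from the connected category $g\dd J$ lifts in the same way to give $Tg:Tc\to Tc'$. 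Well-definedness of $T$ (independence of all chosen factorizations), functoriality and the equalities $TJ=F$, $PT=G$ follow from the connectedness of the slices combined with the faithfulness of $P$ (Proposition \ref{p:faithful}); uniqueness of $T$ follows from the same faithfulness, so $P$ is a lax strong monomorphism.

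For the reverse inclusion, suppose $P:E\to B$ is a lax strong monomorphism and let a $P$-split diagram as in \eqref{P-split} be given. The strategy is to construct a tailor-made commutative square against $P$ whose diagonal fill-in produces the required rectangle \eqref{split} in $E$. Concretely, build a small category $A$ assembled from the $P$-zig-zag data that realizes the chosen factorizations in $E$, a small category $C$ presented by the generators and relations of the outer rectangle of \eqref{P-split}, a functor $\tilde F:A\to E$ realizing the $P$-splits, and a functor $J:A\to C$ together with $G:C\to B$ satisfying $P\tilde F=GJ$. The main technical obstacle is to verify, via Theorem \ref{t:absv}, that $J$ is genuinely a lax epimorphism, i.e.\ that every slice $g\dd J$ is connected; this amounts to checking that the $P$-zig-zag, together with the two $P$-splits, generates the full connectedness of the relevant slices in $C$. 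Once this verification is complete, the lax-strong-mono property of $P$ delivers a unique diagonal $C\to E$ which, evaluated on the generating data of $C$, is exactly the desired commutative rectangle \eqref{split}; its uniqueness is inherited from the uniqueness of the diagonal fill-in.
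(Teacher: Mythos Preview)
Your strategy diverges from the paper's: the paper gives a direct construction of the middle category $\mathbb{E}$ (objects are pairs $(b,B)$ with $B$ a connected component of $1_b\dd F$ containing an $F$-split), then verifies by hand that the left leg is a lax epimorphism, the right leg is a discrete splitting bifibration, and orthogonality holds. It never invokes Theorem~\ref{t:ofs}.

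Your detour through Theorem~\ref{t:ofs} has a genuine gap at the almost-cowellpoweredness step. The sentence ``objects and morphisms of $C$ are constrained to be represented by connected components in slices of $X$'' does not yield a size bound: connectedness of $1_c\dd E$ only tells you that each $c$ is a retract of some $Ea$, and nothing prevents $C$ from having a proper class of such retracts (the paper's own counterexample $A_0\to A_n$ shows lax epimorphisms out of a fixed domain are unbounded). Fixing the codomain $Y$ does not obviously help either: several distinct objects of $C$ can sit over the same $b\in Y$ and determine the same connected component of $1_b\dd F$. The paper's Remark preceding Definition~\ref{d:almost} says almost cowellpoweredness is ``shown in the next section'', but what Section~4 actually does is construct the explicit factorization; almost cowellpoweredness then follows \emph{a posteriori} because that factorization is terminal in $\ce\!\mid\! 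F$. So the only route available to establish your hypothesis is precisely the construction you are trying to avoid.

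A second, smaller gap is the reverse inclusion $\text{LaxStrongMono}\subseteq\{\text{discrete splitting bifibrations}\}$. Your test-square idea is reasonable, but the ``main technical obstacle'' you name --- checking via Theorem~\ref{t:absv} that the hand-built $J:A\to C$ has all slices $g\dd J$ connected --- is the entire content of that direction, and it is not carried out. The paper sidesteps this completely: once it has exhibited an $(\ce,\cm)$-factorization with $\cm=\{\text{discrete splitting bifibrations}\}$ and verified orthogonality, the equality $\cm=\ce^\downarrow=\text{LaxStrongMono}$ is automatic from the general theory of factorization systems, so no test square is needed.
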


 \begin{proof} Along the proof we represent the categories by blackboard bold letters: $\A$, $\B$, etc.

 \vskip2mm

 \noindent (1) The factorization. Given a functor $F:\A\to \B$, we define the category  $\E$ as follows:

 \begin{itemize}
     \item[]
$\text{ob}\,\E$: pairs $(b,B)$ where $b\in  \B$ and $B$ is a connected component of the category  $1_b\dd F$, for which some representative is an $F$-split;
\item[] $\text{mor}\,\E$: all $(b,B)\xrightarrow{g}(c,C)$ with $g:b\to c$ a morphism of $\B$ and $g\cdot B=C\cdot g$, see Notation \ref{n:t.C}.

\item[] The identities and composition are obvious.
 \end{itemize}

Let $$\E\xrightarrow{P}\B$$
be the obvious projection, and define
$$\A\xrightarrow{E}\E$$
by
$Ea=(Fa, C_a)$ where $C_a$ is the connected component of $(1_{Fa}, 1_{Fa})$ in $1_{Fa}\dd F$, and $E(a\xrightarrow{f}a')=((Fa, C_a)\xrightarrow{Ff}(Fa', C_{a'}))$.  $\E$ is clearly well-defined and $F=P\cdot E$.

 \vskip2mm

 \noindent
(2) $E$ is a lax epimorphism.  We need to show that, for every $(b,B)\xrightarrow{g}(d,D)$ in $\E$, the category  $g\dd E$ is connected.

Every identity $(b,B)\xrightarrow{1_b}(b,B)$ factorizes through $Ea$ for some $a\in \A$. Indeed, $B$ contains some $F$-split $b\xrightarrow{h}Fa\xrightarrow{k}b$, and this means that $(b,B)\xrightarrow{h}Ea$ and $Ea\xrightarrow{k} (b,B)$ are morphisms in $\E$. From this, it immediately follows that the category $g\dd E$ is nonempty for all morphism $g$ in $\E$.

 Given two factorizations $(u_i,Ea_i,v_i)$, $i=1,2$, of $g$  in $\E$ as in the figure
\begin{equation}\label{E3}
\xymatrix{&(Fa_1,C_{a_1})\ar[rd]^{v_1}&\\
(b,B)\ar[rr]^g\ar[rd]_{u_2}\ar[ru]^{u_1}&&(d,D)\\
&(Fa_2,C_{a_2})\ar[ru]_{v_2}&}
\end{equation}
by the definition of morphisms in $\E$, we have the following equalities of connected components in $g\dd F$ (see Notation \ref{n:t.C}): $g\cdot  B=v_1u_1\cdot B=v_1\cdot C_{a_1}\cdot u_1=v_1\cdot[(u_1,1_{Fa_1})]=[(u_1,v_1)]$; and, analogously, $g\cdot B=[(u_2,v_2)]$, showing that $[(u_1,v_1)]=[(u_2,v_2)]$ in $g\dd F$; hence, $[(u_1,v_1)]=[(u_2,v_2)]$ also in $g\dd E$.

 \vskip2mm

 \noindent
(3) $P$  is a discrete splitting bifibration.

(3a) First observe that, given two factorizations in $\B$ of a same morphism $g$ of the form
$$\xymatrix@=1.5em{&P(e,E)\ar[rd]^s&\\
b\ar[ru]^{r}\ar[rd]_{r'}&&c\\
&P(e',E')\ar[ru]_{s'}}$$
if $\big(r,(e,E),s\big)$ and $\big(r', (e', E'), s'\big)$ belong to the same connected component of $g\dd P$, then  $s\cdot E\cdot r=s'\cdot E'\cdot r'$ in $g\dd F$. Indeed, a $P$-zig-zag  connecting the two factorizations, as illustrated in the left hand side diagram below, gives rise to an $F$-zig-zag connecting $s\cdot E\cdot r$ to $s'\cdot E'\cdot r'$ in $g\dd F$, as indicated in the right hand side diagram, where $E=[(h,a,k)]$, $E'=[(h',a',k')]$ and $E_j=[(h_j,a_j,k_j)]$:
$$\xymatrix{&P(e,E)\ar[d]^{Pf_1}\ar[rd]^s&\\
b\ar[ru]^r\ar[r]\ar[rd]\ar[rdd]_{r'}&P(e_1, E_1)\ar[r]&c\\
&P(e_2, E_2)\ar[u]_{Pf_2}\ar[ru]\ar@{.}[d]&\\
&P(e',E')\ar[ruu]_{s'}&}\qquad \qquad \xymatrix{&e
\ar[d]_{f_1}\ar[r]^h&Fa\ar@{~}[d]\ar[r]^k&e\ar[d]_{f_1}\ar[rd]^s&\\
b\ar[ru]^r\ar[r]\ar[rd]\ar[rdd]_{r'}&e_1\ar[r]^{h_1}&Fa_1
\ar[r]^{k_1}&e_1\ar[r]&c\\
&e_2\ar@{.}[d]\ar[r]^{h_2}\ar[u]^{f_2}&Fa_2\ar@{~}[u]\ar[r]^{k_2}\ar@{.}[d]&e_2\ar@{.}[d]\ar[ru]\ar[u]^{f_2}&\\
&e'\ar[r]_{h'}&Fa'\ar[r]_{k'}&e'\ar[ruu]_{s'
}&}$$

(3b) Let
\begin{equation}\label{D4}
    \xymatrix{b\ar[d]_g\ar[r]^{u_1}&P(d,D)\ar@{~}[d]\ar[r]^{v_1}&b\ar[d]^g\\
    c\ar[r]_{u_2}&P(e,E)\ar[r]_{v_2}&c}
\end{equation}
be a $P$-split diagram with $D=[(h_1, a_1, k_1)]$ and $E=[(h_2, a_2, k_2)]$, where $(h_i,a_i,k_i)$ is  $F$-split, $i=1,2$. Let $B$ and $C$ be the connected components of $1_b\dd F$ and $1_c\dd F$ given, respectively, by
$$B=v_1\cdot D\cdot u_1=[(h_1u_1, a_1, v_1k_1)]\quad \mbox{ and } \quad C=v_2\cdot E\cdot u_2=[(h_2u_2, a_2, v_2k_2)].$$
 By (3a), since $[(1_d, u_1v_1)]=[(u_1v_1, 1_d)]$ in $u_1v_1\dd P$, we have that $u_1v_1\cdot D=D\cdot u_1v_1$. Then $u_1B=u_1v_1Du_1=Du_1v_1u_1=Du_1$ and  $Bv_1=v_1Du_1v_1=v_1u_1v_1D=v_1D$. Thus, in order to conclude that $(b,B)\xrightarrow{u_1}(d,D)$ and $(d,D)\xrightarrow{v_1}(b,B)$ are morphisms in $\E$, we just need to show that $(b,B)$ is an object of $\E$. Indeed, from the equalities $[(h,uvk)]=[(huv, k)]$ and $[1_{Fa}, hk)]=[(hk,1_{Fa})]$, where the subscripts of the letters were removed by the sake of simplicity, we see that the pair $(hu, a, vk)$ is an $F$-split:

 $\begin{array}{ll}[(1_{Fa},huvk)]&
 =huvk\cdot[(1_{Fa},hk)]=huvk\cdot[(hk, 1_{Fa})]=h\cdot[(h,uvk)]\cdot k=h\cdot[(huv,k)]\cdot k\\
 &=[(huvk,hk)]=[(1,hk)]\cdot huvk=[(hk,1)]\cdot huvk
 =[(huvk,1_{Fa})].
 \end{array}$\newline
 And $B$ is unique, because, if $B'$ is a connected component of $1_b\dd F$  such that
$u_1B'=Du_1$ and $v_1D=B'v_1$, then $B'=v_1u_1B'=v_1Du_1=B$. Analogously for $c\xrightarrow{u_2}P(e,E)\xrightarrow{v_2}c$.

It remains to be shown that $g:(b,B)\to (c,C)$ is a morphism of $\E$. By (3a), the $P$-split diagram \eqref{D4} gives rise to a  diagram of the form
$$\xymatrix{b\ar[d]_g\ar[r]^{u_1}&d\ar[r]^{h_1}&Fa_1\ar@{~}[d]\ar[r]^{k_1}&d\ar[r]^{v_1}&b\ar[d]^g\\
c\ar[r]_{u_2}&e\ar[r]_{h_2}&Fa_2\ar[r]_{k_2}&e\ar[r]_{v_2}&c}$$
showing that
$$gv_1Du_1=v_2Eu_2g \; \; \mbox{ in } \; g\dd F.$$
Hence, by definition of $B$ and $C$,
$$gB=Cg,$$
that is, $g$ is a morphism in $\E$. Since $(b,B)$ and $(c,C)$ are unique, $g$ is clearly unique too. In conclusion, we have a unique diagram of morphisms of $\E$ of the form
\begin{equation*}\label{P4}
    \xymatrix{(b,B)\ar[d]_g\ar[r]^{u_1}&(d,D)\ar[r]^{v_1}&(b,B)\ar[d]^g\\
    (c,C)\ar[r]_{u_2}&(e,E)\ar[r]_{v_2}&(c,C)}
\end{equation*}
whose image by $P$ is the rectangle of \eqref{D4}.

 \vskip2mm

 \noindent
(4) $(\ce,\cm)$ fulfils the diagonal fill-in property.
Let
\begin{equation*}
    \xymatrix{\A\ar[r]^Q\ar[d]_G&\B\ar[d]^H\\
    \C\ar[r]_M&\D}
\end{equation*}
be a commutative diagram where $Q$ is a lax epimorphism and $M$ is a discrete splitting bifibration.

(4a)  We define $T:\B\to \C$ as follows:

Given $b\in \B$, since $Q$ is a lax epimorphism,   the category  $b\dd Q$ is connected. Let $B$ be the unique connected component of $1_b\dd Q$, and let $(h,a,k)$ be a representative of $B$. It is a $Q$-split, since $(1_{Qa},hk)$ and $(hk, 1_{Qa})$ belong to the same connected component of $hk\dd Q$. Hence,
$$Hb\xrightarrow{Hh}MGa\xrightarrow{Hk}Hb$$
is an $M$-split in $\D$.

By Remark \ref{r:unique}, since $M$ is a discrete splitting bifibration, there are unique morphisms $h_0:b_0\to Ga$ and $k_0:Ga\to b_0$ with $Mh_0=Hh$ and $Mk_0=Hk$. We put
\begin{equation}\label{E6}Tb=b_0.
\end{equation}

We show that $b_0$ does not depend on the representative of $B$. Indeed, for another representative $(h',a',k')$, we have a $Q$-split diagram as on the left hand side of \eqref{E7};  by  applying $H$, we get  the $M$-split diagram on the right hand side:
\begin{equation}\label{E7}
  \xymatrix{b\ar@{=}[d]\ar[r]^h&Qa\ar@{~}[d]\ar[r]^k&b\ar@{=}[d]\\
  b\ar[r]_{h'}&Qa'\ar[r]_{k'}&b}\qquad \qquad \xymatrix{Hb\ar@{=}[d]\ar[r]^{Hh}&MGa\ar@{~}[d]\ar[r]^{Hk}&Hb\ar@{=}[d]\\
    Hb\ar[r]_{Hh'}&MGa'\ar[r]_{Hk'}&Hb}
\end{equation}
By hypothesis, there is a unique diagram
$$\xymatrix{b_1\ar[d]_{s}\ar[r]^{h_1}&Ga\ar[r]^{k_1}&b_1\ar[d]^s\\
  b_2\ar[r]_{h_2}&Ga'\ar[r]_{k_2}&b_2}$$
whose image by $M$ is the outside rectangle of the first diagram  of \eqref{E7}. But $M$ reflects identities, by Proposition \ref{p:faithful}. Then $s$ is an identity and, taking into account the unicity of $b_0$ and $k_0$ above, it must be $b_1=b_2=b_0$ and $s=1_{b_0}$.

Let
$$b\xrightarrow{g}c$$
be a morphism in $\B$. Since $Q$ is a lax epimorphism,   there is some $Q$-split diagram of the form
\begin{equation*}
    \xymatrix{b\ar[d]_g\ar[r]^{h_1}&Qa_1\ar@{~}[d]\ar[r]^{k_1}&b\ar[d]^g\\
    c\ar[r]^{h_2}&Qa_2\ar[r]^{k_2}&c}\, .
\end{equation*}
By applying $H$ to it, we obtain an $M$-split diagram:
\begin{equation}\label{E8}
    \xymatrix{Hb\ar[d]_{Hg}\ar[r]^{Hh_1}&MGa_1\ar@{~}[d]\ar[r]^{Hk_1}&Hb\ar[d]^{Hg}\\
    Hc\ar[r]^{Hh_2}&MGa_2\ar[r]^{Hk_2}&Hc}\, .
\end{equation}
By hypothesis, there are unique morphisms
\begin{equation}\label{E8a}
    \xymatrix{b_0\ar[d]_{g_0}\ar[r]^{\hat{h}_1}&Ga_1\ar[r]^{\hat{k}_1}&b\ar[d]^{g_0}\\
    c_0\ar[r]^{\hat{h}_2}&Ga_2\ar[r]^{\hat{k}_2}&c_0}
\end{equation}
making the diagram commutative and whose image by $M$ is the rectangle of \eqref{E8}.
We put
$$Tg=g_0.$$
Again, by the unicity, we know that $b_0$ and $c_0$ do not depend on the representative of $1_b\dd Q$ and $1_c\dd Q$. And the unicity of $g_0$ follows then from the faithfulness of $M$ (Proposition \ref{p:faithful}).

 $T$ is clearly a functor, the preservation of identities and composition being obvious.

(4b) We show that $T$ satisfies the diagonal fill-in condition.

Given $b\in \mbox{ob} \B$, $MTb=Mb_0=Hb$, by construction, and, analogously, $MTg=Hg$, for each $g\in \mbox{mor}\B$.

Given $f:a\to a'$ in $\A$, the $M$-split diagram
\begin{equation*}
    \xymatrix{HQa=MGa\ar[d]_{HQf=MGf}\ar@{=}[r]&MGa\ar@{.>}[d]^{MGf}\ar@{=}[r]&MGa\ar[d]^{MGf}\\
    HQa'=MGa'\ar@{=}[r]&MGa'\ar@{=}[r]&MGa'}
\end{equation*}
ensures that $TQf=Gf$.

Finally, if $T':\B\to \C$ is another functor  such that  $T'Q=G$ and $MT'=H$, we show that $T=T'$. Let $g:b\to d$ be a morphism of $\B$. The morphism $T(b\xrightarrow{g}d)=b_0\xrightarrow{g_0}d_0$ is the unique one making part of a commutative rectangle as in \eqref{E8a} whose image by $M$ is the rectangle of the $M$-split diagram \eqref{E8}. But the image by $M$ of the rectangle
$$\xymatrix{T'b\ar[d]_{T'g}\ar[r]^{T'h}\ar[d]&Ga\ar[r]^{T'k}&T'b\ar[d]^{T'g}\\
T'd\ar[r]_{T'h'}&Ga'\ar[r]_{T'k'}&T'd}$$
 gives also the $M$-split diagram  \eqref{E8}. Then $T'g=g_0=Tg$.
\end{proof}

\begin{question}\label{quest1}Inserters in $\Cat$ are discrete splitting bifibrations (by Proposition  \ref{p:inserter}). We don't know if the converse is true or not.
\end{question}

\section{Lax epimorphisms in the enriched context}

In this section we  study   lax epimorphisms in the enriched setting.

\begin{assumption}\label{assumption1} Throughout the section,
 $\cv=\left( \cv_0,\otimes, I\right)$ is a symmetric monoidal closed category with $\cv_0$ complete.
\end{assumption}

 We denote by $\cv$-$\Cat$ the 2-category of \textit{small} $\cv$-categories, $\cv$-functors and $\cv$-natural transformations.

Let $\ca $ be a small $\cv$-category, and $\cb $ a (possibly large) $\cv$-category. By abuse of language, we also denote by $\cv$-$\Cat(\ca,\cb)$ the (ordinary) category of $\cv$-functors from $\ca$ to $\cb$ and $\cv$-natural transformations between them.
Moreover, in this setting, the designation $\cv$-$\Cat[\ca,\cb]$  (or just $[\ca,\cb]$) represents the $\cv$-category of $\cv$-functors; thus, for any pair of $\cv$-functors $F, G : \ca\to\cb$,  the hom-object $\cv$-$\Cat[\ca,\cb](F,G)$ is given by  the end
$$ \int _{A\in \ca } \cb (F A , G A ) \, .$$

Recall that a $\cv$-functor $P:\ca\to\cb$ is {\em $\cv$-fully faithful} (called just {\em fully faithful} in \cite{kelly82}) if  the map $P_{A,A'}:\ca(A,A') \to \cb(PA,PA')$ is an isomorphism in $\cv_0$ for all $A,A'\in \ca$.

Let $\ci$ be the {\em unit $\cv$-category} with one object    $0$ and $\ci(0,0)=I$.
Given a $\cv$-functor $P:\ca\to\cb$,   the underlying functor of $P$ is denoted by
$P_0 = \cv\textrm{-}\Cat (\ci , P) :\ca_0\to\cb_0$.

In general, we use the notations of \cite{kelly82}; concerning limits, we denote a weighted limit over a functor $F:\cd\to \cc$ with respect to a weight $W:\cd \to \cv$ by $\lim(W,F)$ (called indexed limit and designated  by  $\{W,F\}$ in \cite{kelly82}).

\begin{lemma}\label{l:ff-enr}
For a $\cv$-functor $P:\ca\to\cb$, consider the following conditions.
\begin{enumerate}[(a)]

\item\label{lemma(a):characterization-ff-vfunctor}
$P$ is $\cv$-fully faithful.

\item\label{lemma(b):characterization-ff-vfunctor}
$P_0$ is fully faithful.

\item\label{lemma(c):characterization-ff-vfunctor}
The functor $\Cat(\cc,P_0):\Cat(\cc, \ca_0)\to \Cat(\cc,\cb_0)$ is fully faithful  for every (ordinary) category $\cc$.

\item\label{lemma(d):characterization-ff-vfunctor}
The functor $\cv$-$\Cat(\cc,P):\cv$-$\Cat(\cc, \ca)\to \cv$-$\Cat(\cc,\cb)$ is fully faithful  for every $\cv$-category $\cc$.

\item\label{lemma(f):characterization-ff-vfunctor}
The $\cv$-functor $\cv$-$\Cat[\cc,P]:\cv$-$\Cat[\cc, \ca]\to \cv$-$\Cat[\cc,\cb]$ is $\cv$-fully faithful for every $\cv$-category $\cc$.

\end{enumerate}
We have that
$$\xymatrix{\eqref{lemma(a):characterization-ff-vfunctor}\ar@{<=>}[r] &\eqref{lemma(f):characterization-ff-vfunctor}\ar@{=>}[r]&\eqref{lemma(d):characterization-ff-vfunctor}\ar@{=>}[r]
& \eqref{lemma(c):characterization-ff-vfunctor}\ar@{<=>}[r]
&\eqref{lemma(b):characterization-ff-vfunctor}}$$
 The five conditions are equivalent whenever (i) $P$ has a left or right $\cv$-adjoint, or (ii)  $V=\cv_0(I, -):\cv_0\to \Set$ is conservative.
\end{lemma}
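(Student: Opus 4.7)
The plan is to prove the chain
\[
\eqref{lemma(a):characterization-ff-vfunctor}\Longleftrightarrow \eqref{lemma(f):characterization-ff-vfunctor} \Longrightarrow \eqref{lemma(d):characterization-ff-vfunctor} \Longrightarrow \eqref{lemma(c):characterization-ff-vfunctor}\Longleftrightarrow \eqref{lemma(b):characterization-ff-vfunctor},
\]
and then close the loop under each of the two extra hypotheses.

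For \eqref{lemma(a):characterization-ff-vfunctor}$\Rightarrow$\eqref{lemma(f):characterization-ff-vfunctor}, I would use that, for any $\cv$-functors $F,G:\cc\to \ca $, the action of $\cv$-$\Cat[\cc, P]$ on hom-objects is the end of the morphisms $P_{FC,GC}: \ca(FC,GC)\to \cb(PFC,PGC)$, $C\in\cc $. Since ends are limits and limits of isomorphisms in $\cv _ 0$ are isomorphisms, $\cv $-full faithfulness of $P$ yields $\cv $-full faithfulness of $\cv $-$\Cat\left[ \cc, P\right] $. For the converse \eqref{lemma(f):characterization-ff-vfunctor}$\Rightarrow$\eqref{lemma(a):characterization-ff-vfunctor}, I would take $\cc = \ci $: objects of $\cv $-$\Cat\left[ \ci , \ca \right] $ are in bijection with objects of $\ca $, and the hom-object $\cv $-$\Cat\left[ \ci , \ca \right]\left( A, A'\right) $ is the trivial end $\int _ {0\in \ci } \ca (A, A') = \ca (A,A') $, under which $\cv $-$\Cat\left[ \ci , P\right] $ identifies with $P$ on hom-objects.

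The implication \eqref{lemma(f):characterization-ff-vfunctor}$\Rightarrow$\eqref{lemma(d):characterization-ff-vfunctor} is then immediate: the underlying functor $\cv $-$\Cat\left( \cc, P\right) $ of the $\cv $-functor $\cv $-$\Cat\left[ \cc, P\right] $ (i.e., its image under $\cv $-$\Cat\left( \ci , -\right) $) is fully faithful, since taking underlying categories turns $\cv $-fully faithful $\cv $-functors into fully faithful ordinary functors. For \eqref{lemma(d):characterization-ff-vfunctor}$\Rightarrow$\eqref{lemma(c):characterization-ff-vfunctor}, I would invoke the standard adjunction $\left( -\right) _\cv \dashv \left( - \right) _ 0 : \cv\textrm{-}\Cat \to \Cat $ (the free $\cv $-category functor on the left), which gives a natural isomorphism
\[
\cv\textrm{-}\Cat \left(  \cc _ \cv , \ca \right) \cong \Cat \left( \cc , \ca _ 0 \right),
\]
under which $\cv $-$\Cat\left( \cc _\cv, P\right) $ corresponds to $\Cat \left( \cc , P _ 0\right)  $; applying \eqref{lemma(d):characterization-ff-vfunctor} to $\cc _\cv $ finishes the job. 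The equivalence \eqref{lemma(b):characterization-ff-vfunctor}$\Leftrightarrow$\eqref{lemma(c):characterization-ff-vfunctor} is the classical fact: $\eqref{lemma(c):characterization-ff-vfunctor}\Rightarrow \eqref{lemma(b):characterization-ff-vfunctor} $ follows by setting $\cc $ to be the terminal category, and the converse follows from the pointwise construction of natural transformations $\alpha _ C $ out of $P_0\alpha _ C = \beta _ C $, with naturality of $\alpha $ deduced from naturality of $\beta $ and faithfulness of $P_0$.

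Finally, it remains to show that under (i) or (ii), \eqref{lemma(b):characterization-ff-vfunctor}$\Rightarrow$\eqref{lemma(a):characterization-ff-vfunctor}. Under (ii), note that $\left( P_0\right) _ {A,A'} = V\left( P_{A,A'}\right)  $ since $\ca _ 0 \left( A, A' \right) = \cv _ 0 \left( I, \ca\left( A, A'\right)\right) = V\left( \ca\left( A,A'\right) \right) $; hypothesis \eqref{lemma(b):characterization-ff-vfunctor} makes each $V\left( P _{A,A'} \right) $ a bijection, and conservativity of $V$ promotes it to an isomorphism in $\cv _ 0 $. Under (i), say $F\dashv P $ with unit $\eta $ and counit $\varepsilon $, I would use the standard fact that $P$ is $\cv $-fully faithful iff $\varepsilon $ is a $\cv $-natural isomorphism, and that a $\cv $-natural transformation is an isomorphism iff it is pointwise an isomorphism in $\cv _ 0 $, iff its underlying natural transformation is an isomorphism in $\cb _ 0 $; thus \eqref{lemma(b):characterization-ff-vfunctor}, which says that the underlying counit $\varepsilon _ 0 $ of $F_0\dashv P_0 $ is an isomorphism, yields that $\varepsilon $ itself is. The only mildly delicate point in the whole argument is making sure that the end formula is used correctly in \eqref{lemma(a):characterization-ff-vfunctor}$\Leftrightarrow$\eqref{lemma(f):characterization-ff-vfunctor}, which is where the asymmetry between \eqref{lemma(d):characterization-ff-vfunctor} and \eqref{lemma(f):characterization-ff-vfunctor} ultimately lives.
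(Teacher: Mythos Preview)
Your plan is essentially the paper's own argument, with the same pivots: the end formula for \eqref{lemma(a):characterization-ff-vfunctor}$\Leftrightarrow$\eqref{lemma(f):characterization-ff-vfunctor}, specialization to $\cc=\ci$ for the converse, passage to underlying functors, and the Kelly-style counit argument for the adjoint case. One technical point deserves care, though: your step \eqref{lemma(d):characterization-ff-vfunctor}$\Rightarrow$\eqref{lemma(c):characterization-ff-vfunctor} invokes the free $\cv$-category adjunction $(-)_\cv \dashv (-)_0$, but the left adjoint $(-)_\cv$ requires set-indexed copowers of $I$ in $\cv_0$, and at this stage only completeness of $\cv_0$ is assumed (cocompleteness is only added later in the paper). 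The paper sidesteps this by routing through \eqref{lemma(b):characterization-ff-vfunctor} instead: take $\cc=\ci$ in \eqref{lemma(d):characterization-ff-vfunctor} to obtain $P_0=\cv\textrm{-}\Cat(\ci,P)$ fully faithful, and then use your own \eqref{lemma(b):characterization-ff-vfunctor}$\Leftrightarrow$\eqref{lemma(c):characterization-ff-vfunctor}. With that reroute your argument is complete and matches the paper.
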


\begin{proof}
It is well-known that \eqref{lemma(a):characterization-ff-vfunctor} $\Leftrightarrow$ \eqref{lemma(b):characterization-ff-vfunctor} in case we have (i) or (ii)~\cite[1.3 and 1.11]{kelly82}.

\eqref{lemma(b):characterization-ff-vfunctor}  $\Leftrightarrow$ \eqref{lemma(c):characterization-ff-vfunctor}. It is just Remark \ref{r:ff}.

\eqref{lemma(a):characterization-ff-vfunctor}  $\Rightarrow$ \eqref{lemma(d):characterization-ff-vfunctor}. Given two $\cv$-functors $F,G:\cc\to \ca$, and a $\cv$-natural transformation $\gb:PF\to PG$, we want to show that there is a unique $\cv$-natural transformation $\ga: F\to G$ with $P\ga=\gb$. Since $P$ is $\cv$-fully faithful, $P_{A,B}$ is a $\cv _0$-isomorphism for all $A, B\in \ca$. We just define $\ga:F\to G$ with each component $\ga_C$ given by
$$\ga_C\equiv \big(I\xrightarrow{\gb_C}\cb(PFC,PGC)\xrightarrow{(P_{FC,GC})^{-1}}\ca(FC,GC)\big).$$
Clearly $\beta_C=P\ga_C$ for each $C$, and $\ga$ is unique. From the $\cv$-naturality of $\gb$ and the fact that $P$ is a $\cv$-functor, it immediatly follows that $\ga$ is $\cv$-natural.

 \eqref{lemma(d):characterization-ff-vfunctor} $\Rightarrow$ \eqref{lemma(b):characterization-ff-vfunctor}. It follows from  the fact that $P_0 = \cv\textrm{-}\Cat (\ci , P )$ by definition.

\eqref{lemma(f):characterization-ff-vfunctor} $\Rightarrow$ \eqref{lemma(a):characterization-ff-vfunctor}.
Recall that there is a bijection
\begin{equation*}
    \ca\ni A \mapsto \functorA\in \cv\textrm{-}\Cat [\ci , \ca]
\end{equation*}
in which $\functorA :\ci \to \ca $ is the only $\cv$-functor from the unit $\cv$-category $\ci$ to $\ca$ such that
$\functorA 0 = A$. Moreover, for any $A, B\in\ca $, the hom-object $\ca(A,B)$  is the end  $\int_{\ci}\ca(\overline{A}-,\overline{B}-)$ which gives the hom-object $\cv-\Cat[\ci, \ca](\overline{A},  \overline{B})$.  We get that, for any $\cv$-functor $P:\ca\to \cb$, the morphism $P_{A,B}$  is essentially $\cv\textrm{-}\Cat [\ci , \ca ] (\functorA , \functorB ) $.

 Therefore $\cv\textrm{-}\Cat [\ci , P ]$ is $\cv$-fully faithful if and only if $P$ is $\cv$-fully faithful.

\eqref{lemma(a):characterization-ff-vfunctor} $\Rightarrow$ \eqref{lemma(f):characterization-ff-vfunctor}. Given a $\cv$-category $\cc$ and $\cv$-functors $F,G:\cc\to \ca$, we have that
$$\cv\textrm{-}\Cat[\cc, P] _{F,G} : \cv\textrm{-}\Cat [\cc,\ca](F,G)\to \cv\textrm{-}\Cat[\cb,\ca](PF,PG) $$
is, by definition, the morphism
\begin{equation}\label{end-of-v-natural-transformation-induced-by-P}
\int_{C\in \cc} P_{(FC,GC)} : \int_{C\in \cc}\ca (FC, GC)\to\int_{C\in \cc}\cb (PFC, PGC) \end{equation}
induced by the $\cvhf$natural transformation
between the $\cvhf$functors $\ca (F-, G-) $ and  $\cb (PF-, PG-)$
whose components are given by
\begin{equation}\label{v-natural-transformation-induced-by-P}
P_{FA,GB} : \ca (FA, GB)\to\cb (PFA, PGB) .
\end{equation}
Since $P$ is $\cv$-fully faithful, we have that \eqref{v-natural-transformation-induced-by-P}
is invertible and, hence, \eqref{end-of-v-natural-transformation-induced-by-P}
is invertible.
\end{proof}

From Lemma \ref{l:ff-enr}, we obtain:

\begin{lemma}\label{coro:iso-coherence-fullyfaithful-rightadjoin}
Given a $\cv$-adjunction $ \left( \varepsilon , \eta \right) : F\dashv G : \ca\to\cb $, the $\cv$-functor $G$ is $\cv$-fully faithful if and only if there is any (ordinary) natural isomorphism $$ F_0G_0 \rightarrow \id _{\ca _ 0} .$$
\end{lemma}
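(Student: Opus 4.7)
The forward direction is immediate: if $G$ is $\cv$-fully faithful then by Lemma~\ref{l:ff-enr} (whose condition (i) is satisfied thanks to $F\dashv G$) the underlying functor $G_0$ is fully faithful; since $F_0\dashv G_0$ is an ordinary adjunction, this is equivalent to the counit $\varepsilon_0\colon F_0G_0\to\id_{\ca_0}$ being a natural isomorphism, which supplies the desired iso.

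For the converse, assume merely that some natural isomorphism $\theta\colon F_0G_0\to\id_{\ca_0}$ exists; by Lemma~\ref{l:ff-enr} it suffices to show that the counit $\varepsilon_0$ is itself an iso. The strategy is to exploit the comonad structure that the adjunction puts on $F_0G_0$, with counit $\varepsilon_0$ and comultiplication $\delta:=F_0\eta_0 G_0$. The key observation is that this comonad structure transports along the iso $\theta$ to a comonad structure on $\id_{\ca_0}$, whose counit is $\mu:=\varepsilon_0\circ\theta^{-1}$ and whose comultiplication $\delta'$ is the corresponding transport of $\delta$; both $\mu$ and $\delta'$ are natural self-transformations of $\id_{\ca_0}$ and hence live in the center $Z(\ca_0):=[\ca_0,\ca_0](\id_{\ca_0},\id_{\ca_0})$.

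I would finish via the Eckmann--Hilton argument: on self-endomorphisms of $\id_{\ca_0}$, vertical and horizontal compositions coincide, so $Z(\ca_0)$ is a commutative monoid. The counit law of the transported comonad gives $\mu\circ\delta'=\id$, and commutativity then promotes this to $\delta'\circ\mu=\id$ as well, so $\mu$ is invertible in $Z(\ca_0)$ with inverse $\delta'$. Consequently $\varepsilon_0=\mu\circ\theta$ is a composition of natural isomorphisms, and therefore itself an iso. The step most susceptible to slips is the verification that the transported data on $\id_{\ca_0}$ really forms a comonad --- this is a routine 2-categorical calculation using the naturality of $\theta$ and the triangle identity $\varepsilon_0 F_0\circ F_0\eta_0=\id_{F_0}$, and in fact one only needs the single pointwise identity $\mu\circ\delta'=\id$, which reduces to $\theta_A\circ\theta_A^{-1}=\id_A$ after applying naturality of $\varepsilon_0$ and the triangle identity --- but once granted, the Eckmann--Hilton observation closes the argument in a single line.
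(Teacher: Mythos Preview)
Your argument is correct. The forward direction matches the paper exactly: Lemma~\ref{l:ff-enr} under hypothesis (i) reduces $\cv$-full-faithfulness of $G$ to ordinary full-faithfulness of $G_0$, and the latter is equivalent to invertibility of $\varepsilon_0$.

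Where you diverge is in the converse. The paper simply invokes \cite[Lemma~1.3]{JM} for the fact that the existence of \emph{any} natural isomorphism $F_0G_0\cong\id_{\ca_0}$ forces the counit $\varepsilon_0$ to be invertible; you instead supply a self-contained proof of that lemma via the Eckmann--Hilton argument on the commutative monoid $[\ca_0,\ca_0](\id,\id)$. Transporting the comonad $(F_0G_0,\varepsilon_0,F_0\eta_0G_0)$ along $\theta$ does yield a comonad on $\id_{\ca_0}$, and the single counit identity $\mu\circ\delta'=\id$ together with commutativity gives the two-sided inverse. This is a standard and clean proof of the Johnstone--Moerdijk lemma; the advantage is that your write-up is independent of the external reference, while the paper's version is shorter precisely because it outsources this step. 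Both routes pass through the same reduction to the underlying adjunction and the same pivotal observation that a one-sided inverse in the centre of a category is automatically two-sided.
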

\begin{proof}
By Lemma \ref{l:ff-enr}, $G$ is $\cv$-fully faithful if and only if $G_0$ is fully faithful.  It is well known that $G_0$ is fully faithful in $\Cat $ if and only if the counit $\varepsilon _ 0 $ is invertible (see diagram \eqref{eq:adjunction-diagram-counit}), and, following~\cite[Lemma~1.3]{JM},  if and only if there is any
natural isomorphism between $F_0G_0$ and the identity\footnote{See \cite{JM} or \cite{FLN1} for further results on non-canonical isomorphisms.}.
\end{proof}

 On one hand, by Definition \ref{d:laxepi}, a $\cv$-functor $P:\ca\to \cb$ between small $\cv$-categories is said  a lax epimorphism in the 2-category $\cv$-$\Cat$ if the (ordinary) functor
$$\cv\text{-}\Cat(P, \cc): \cv\text{-}\Cat(\cb, \cc) \to \cv\text{-}\Cat(\ca, \cc)$$ is fully faithful, for all $\cv$-categories $\cc$.
On the other hand, the notion of $\cv$-fully faithful functor and Lemma \ref{l:ff-enr} inspire the following definition.

\begin{definition}\label{d:vlaxepi}
A $\cv$-functor $J:\ca\to \cb$ (between small $\cv$-categories) is a {\em $\cv$-lax epimorphism} if, for any $\cc $ in $\cv$-$\Cat$, the $\cv$-functor
$$\cv\text{-}\Cat \left[ J, \cc\right] : \cv\text{-}\Cat \left[ \cb, \cc\right] \to \cv\text{-}\Cat\left[ \ca, \cc \right]$$
is $\cv$-fully faithful.
\end{definition}

\begin{assumption}\label{assumption2}
Until now,  we are assuming that $\cv_0$, and then also the $\cv$-category $\cv$, is complete (Assumption \ref{assumption1}). From now on, we assume furthermore that $\cv_0$ is also cocomplete.
\end{assumption}

\begin{theorem}\label{t:V-lax}
 Given a $\cv$-functor $J:\ca \to \cb$ between small $\cv$-categories $\ca$ and $\cb$,   the following conditions are equivalent.
\begin{enumerate}[(a)]
\item $J$ is a $\cv$-lax epimorphism.\label{(g)t:V-lax}
\item $J$ is a lax epimorphism in the 2-category $\cv$-$\Cat$. \label{(h)t:V-lax}
\item The functor $\cv$-$\Cat(J, \cv): \cv$-$\Cat(\cb, \cv) \to \cv$-$\Cat(\ca, \cv)$ is fully faithful.\label{(i)t:V-lax}
\item The $\cv$-functor $\cv$-$\Cat[J, \cv]: \cv\textrm{-}\Cat[\cb, \cv] \to \cv$-$\Cat[\ca, \cv]$ is $\cv$-fully faithful. \label{(a)t:V-lax}
\item\label{item-t:Lan} There is a $\cv$-natural isomorphism $\Lan _J\cb (B, J - )\cong \cb (B, -) $ ($\cv$-natural in $B\in\cb ^\op $).\label{(e)t:V-lax}
\item The $\cv$-functor $\cv$-$\Cat[J, \cc]: \cv\textrm{-}\Cat[\cb, \cc] \to \cv$-$\Cat[\ca, \cc]$ is $\cv$-fully faithful for every (possibly large) $\cv$-category $\cc$.\label{(f)t:V-lax}
\end{enumerate}
\end{theorem}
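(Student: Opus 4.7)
My plan is to establish the single cycle
(c) $\Rightarrow$ (d) $\Leftrightarrow$ (e) $\Rightarrow$ (f) $\Rightarrow$ (a) $\Rightarrow$ (b) $\Rightarrow$ (c),
from which all six conditions are equivalent. The implications (f) $\Rightarrow$ (a), (a) $\Rightarrow$ (b), and (d) $\Rightarrow$ (c) are immediate: the first by restriction to small $\cc$, the latter two by passing to the underlying ordinary functor and invoking the trivial direction of Lemma \ref{l:ff-enr}. For (c) $\Rightarrow$ (d), observe that the $\cv$-functor $\cv\text{-}\Cat[J,\cv]$ admits the left $\cv$-adjoint $\Lan _J$ (since $\cv $ is $\cv$-cocomplete by Assumption \ref{assumption2} and $\ca$ is small), so the full strength of Lemma \ref{l:ff-enr} applies and the ordinary fully-faithfulness in (c) upgrades to the $\cv$-fully-faithfulness in (d). Finally, (b) $\Rightarrow$ (c) is a short size reflection: because $\cb$ is small, any $\cv$-functors $F,G:\cb\to\cv$ and any $\cv$-natural $\beta:FJ\Rightarrow GJ$ factor through a small full sub-$\cv$-category $\cv'\subseteq\cv$, and (b) applied to $\cc=\cv'$, followed by the inclusion $\cv'\hookrightarrow\cv$, gives the required unique lift.

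For (d) $\Leftrightarrow$ (e), I invoke Lemma \ref{coro:iso-coherence-fullyfaithful-rightadjoin} applied to the $\cv$-adjunction $\Lan _J\dashv(-\circ J): \cv\text{-}\Cat[\cb,\cv]\to\cv\text{-}\Cat[\ca,\cv]$: the right $\cv$-adjoint is $\cv$-fully faithful if and only if the counit $\varepsilon _F:\Lan_J(FJ)\to F$ is invertible for every $F\in\cv\text{-}\Cat[\cb,\cv]$. Specialising to $F=\cb(B,-)$, with $\cv$-naturality in $B$ inherited from naturality of $\varepsilon$ in $F$, yields (d) $\Rightarrow$ (e). Conversely, every $F:\cb\to\cv$ has the canonical coYoneda presentation $F\cong\int^{B} FB\cdot \cb(B,-)$, and since both $\Lan _J(-\circ J)$ and $\id$ are $\cv$-cocontinuous (as composites of left $\cv$-adjoints), invertibility of $\varepsilon _F$ at representables propagates to invertibility at every $F$.

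The decisive step, and the main obstacle, is (e) $\Rightarrow$ (f), which I tackle by a direct end calculation. First I rewrite (e), via the adjunction $\Lan _J\dashv(-\circ J)$ and enriched Yoneda, as the identity $KB\cong \int_{A}[\cb(B,JA),KJA]$ holding $\cv$-naturally for every $K:\cb\to\cv$ and every $B\in\cb$. Applied to $K=\cc(FB,G-):\cb\to\cv$, this gives $\cc(FB,GB)\cong \int_{A}[\cb(B,JA),\cc(FB,GJA)]$; taking the end over $B$, swapping ends by Fubini, and then invoking the contravariant enriched Yoneda identity $\int_{B}[\cb(B,JA),T(B)]\cong T(JA)$ with $T=\cc(F-,GJA):\cb^{\op}\to\cv$ produces
\[ \cv\text{-}\Cat[\cb,\cc](F,G)=\int_{B}\cc(FB,GB)\cong\int_{A}\cc(FJA,GJA)=\cv\text{-}\Cat[\ca,\cc](FJ,GJ). \]
The remaining task, which is where the bookkeeping lives, is to verify that this composite of canonical isomorphisms actually coincides with precomposition along $J$ and is $\cv$-natural in $F$ and $G$, so that it realises $\cv\text{-}\Cat[J,\cc]_{F,G}$ itself as a $\cv_0$-isomorphism; this is a routine unwinding of the explicit formulas for the coYoneda and Yoneda isomorphisms above.
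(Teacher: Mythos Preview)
Your cycle and most implications coincide with the paper's: (a)$\Rightarrow$(b), (b)$\Rightarrow$(c), (d)$\Rightarrow$(e), (f)$\Rightarrow$(a) are handled identically, and for (c)$\Rightarrow$(d) you use the left adjoint $\Lan_J$ where the paper uses the right adjoint $\Ran_J$, but Lemma~\ref{l:ff-enr} covers either. Your direct (e)$\Rightarrow$(d) via coYoneda and cocontinuity of $\Lan_J(-\circ J)$ is an addition the paper does not make explicit.

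The genuine divergence is at (e)$\Rightarrow$(f). The paper proceeds conceptually: it factors $\cv\textrm{-}\Cat[J,\cc]$ through its full image, shows that for each $G=FJ$ in the image the pointwise right Kan extension $\Ran_J(FJ)$ exists and is isomorphic to $F$ (using $\lim(\Lan_J\cb(B,J-),F)\cong\lim(\cb(B,-),F)\cong FB$ together with \cite[Prop.~4.57]{kelly82}), and then applies Lemma~\ref{coro:iso-coherence-fullyfaithful-rightadjoin}, which only requires \emph{some} natural isomorphism $\Ran_J\circ(-\circ J)\cong\id$. Your route is a direct end calculation via Fubini and enriched Yoneda, which is more elementary and avoids the image factorization. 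One caution, though: your ``routine unwinding'' that identifies the composite isomorphism with $\cv\textrm{-}\Cat[J,\cc]_{F,G}$ genuinely needs the isomorphism $KB\cong\int_A[\cb(B,JA),KJA]$ to be the \emph{canonical} one (the unit of $(-\circ J)\dashv\Ran_J$); condition (e) as stated only furnishes \emph{some} $\cv$-natural isomorphism, and tracing an arbitrary one through your chain need not yield precomposition on the nose. This is not fatal in your scheme, since you have already secured (d), whence the unit is invertible and may be used; but your argument is really (d)$\Rightarrow$(f) rather than (e)$\Rightarrow$(f). The same remark applies, more mildly, to your (e)$\Rightarrow$(d): you should propagate the \emph{given} isomorphism rather than $\varepsilon$, and then invoke the ``any natural isomorphism'' form of Lemma~\ref{coro:iso-coherence-fullyfaithful-rightadjoin}. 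The paper's approach sidesteps both subtleties precisely by leaning on that lemma instead of identifying a composite with $\cv\textrm{-}\Cat[J,\cc]_{F,G}$.
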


\begin{proof}

\eqref{(g)t:V-lax} $\Rightarrow$ \eqref{(h)t:V-lax}.
It follows from the implication \eqref{lemma(a):characterization-ff-vfunctor} $\Rightarrow$ \eqref{lemma(b):characterization-ff-vfunctor} of Lemma \ref{l:ff-enr}.
Namely, given a (small) $\cv$-category $\cc$,
since $\cv\textrm{-}\Cat\left[  J , \cc \right] $ is $\cv$-fully faithful, we get that
$\cv\textrm{-}\Cat\left[  J , \cc \right] _ 0 = \cv\textrm{-}\Cat\left(  J , \cc \right) $ is fully faithful.

\eqref{(h)t:V-lax} $\Rightarrow$ \eqref{(i)t:V-lax}. Given any $\cv$-functors
$F,G : \cb\to \cv $, we denote by $ P : \cc\to \cv $ the full inclusion of the (small) sub-$\cv$-category
of $\cv $ whose objects are in the image of $F$ or in the image of $G$.

It should be noted that $\cv\textrm{-}\Cat\left(  J , \cc \right) _{F,G} $ is a bijection by hypothesis, and  $\cv\textrm{-}\Cat\left(  \ca ,  P\right) _{F,G}, \cv\textrm{-}\Cat\left(  \cb ,  P\right) _{F,G} $
are bijections since $P$ is $\cv$-fully faithful. Therefore, since
the diagram
\begin{equation}\label{eq:C-to-V}
\begin{tikzpicture}[x=5cm, y=1.5cm]
\node (a) at (0,0) {$\cv\textrm{-}\Cat\left(  \cb , \cc \right)\left( F, G \right) $};
\node (b) at (2,0) {$\cv\textrm{-}\Cat\left(  \ca , \cc \right)\left( F\cdot J , G\cdot J \right) $ };
\node (c) at (0,-1) {$\cv\textrm{-}\Cat\left(  \cb , \cv\right) \left( F, G \right) $ };
\node (d) at (2,-1) {$\cv\textrm{-}\Cat\left(  \ca  , \cv\right)\left( F\cdot J , G\cdot J \right) $ };
\draw[->] (c)--(d) node[midway,below] {$\cv\textrm{-}\Cat\left(  J , \cv\right) _{F,G} $ };
\draw[->] (a)--(c) node[midway,left] {$\cv\textrm{-}\Cat\left(  \cb ,  P\right) _{F,G} $ };
\draw[->] (b)--(d) node[midway,right] {$\cv\textrm{-}\Cat\left(  \ca ,  P\right) _{F,G} $ };
\draw[->] (a)--(b) node[midway,above] {$\cv\textrm{-}\Cat\left(  J , \cc \right) _{F,G} $};
\end{tikzpicture}
\end{equation}
commutes,  we conclude that $\cv\textrm{-}\Cat\left( J, \cv\right) _{F,G}$ is also a bijection.
This proves that $\cv\textrm{-}\Cat\left( J, \cv\right) $ is fully faithful.

\eqref{(i)t:V-lax} $\Rightarrow$ \eqref{(a)t:V-lax}.
Since $\cv $ is complete, we have that $\cv\textrm{-}\Cat\left[ J, \cv\right] $ has a right $\cv$-adjoint given by the (pointwise) Kan extensions $\Ran _J $.
Therefore, assuming that $\cv\textrm{-}\Cat\left( J, \cv\right) $ is  fully faithful, we conclude that $\cv\textrm{-}\Cat\left[ J, \cv\right] $
is $\cv$-fully faithful by Lemma \ref{l:ff-enr}.

\eqref{(a)t:V-lax} $\Rightarrow$ \eqref{(e)t:V-lax}.
Since $\cv $ is cocomplete, we have that $\Lan _J\dashv  \cv\textrm{-}\Cat \left[ J, \cv \right] $. Therefore, assuming that $\cv\textrm{-}\Cat \left[ J, \cv \right] $ is $\cv$-fully faithful, we have the $\cv$-natural isomorphism  $\epsilon : \Lan _ J \left( - \cdot J\right)\cong \id _{\cv\textrm{-}\Cat [\cb , \cv ] } $ given by the counit.

Denoting by $\YonedaE _{\cb ^\op } $ the Enriched Yoneda Embedding (see, for instance, \cite[2.4]{kelly82}), we have that $\epsilon^{-1}\ast \id _{\YonedaE  _{\cb ^\op } }$  gives
an isomorphism $\Lan _J\cb (B, J - )\cong \cb (B, -) $ ($\cv$-natural in $B\in\cb ^\op $).

\eqref{(e)t:V-lax} $\Rightarrow$ \eqref{(f)t:V-lax}.
Let $\cc $ be any (possibly large) $\cv$-category. We consider the $\cv $-functor $\cv\textrm{-}\Cat[ J , \cc] $
and its factorization
\begin{equation}
\begin{tikzpicture}[x=7cm, y=1.6cm]
\node (a) at (0,0) {$\cv\textrm{-}\Cat\left[  \cb , \cc \right] $};
\node (b) at (1,0) {$\imagefactorizationofprecomposition $ };
\node (c) at (1,-1) {$\cv\textrm{-}\Cat\left[  \ca , \cc\right]  $ };
\draw[->] (a)--(c) node[midway,below left] {$\cv\textrm{-}\Cat\left[  J , \cc\right] $ };
\draw[->] (a)--(b) node[midway,above] {$\imagefactorizationofprecompositionbo $};
\draw[->] (b)--(c);
\end{tikzpicture}
\end{equation}
into a bijective on objects $\cv $-functor $\imagefactorizationofprecompositionbo $
and the $\cv$-full inclusion
$\imagefactorizationofprecomposition\to \cv\textrm{-}\Cat[ \ca , \cc] $  of the sub-$\cv$-category $\imagefactorizationofprecomposition $  whose objects are in the image of $\cv\textrm{-}\Cat\left[  J , \cc\right] $.
We prove below that  $ \cv\textrm{-}\Cat[ J , \cc]  $ is $\cv $-fully faithful by proving that $\imagefactorizationofprecompositionbo  $ is $\cv $-fully faithful.

Given any $\cv $-functor $G: \ca\to\cc $ in $\imagefactorizationofprecomposition $, we have that $G = FJ $ for some
$F: \cb\to \cc $.
Since  $\Lan _ J \cb (B , J-)\cong \cb (B , -) $, we conclude that $\lim \left( \Lan _ J \cb (B , J-), F   \right) $ exists and, moreover, we have the isomorphisms
\begin{equation}\label{eq:phase1-isomorphic-LanJB(b,J-) cong B(b,-)}
\lim \left( \Lan _ J \cb (B , J-), F   \right)\cong \lim \left(  \cb (B , -), F   \right) \cong F(B)
\end{equation}
by the (strong) Enriched Yoneda Lemma (see \cite[Sections~2.4 and 4.1]{kelly82}).

Since $\lim \left( \Lan _ J \cb (B , J-), F   \right) $ exists, it follows as a consequence of
the universal property of left Kan extensions that
$\lim\left( \cb (B , J-), F\cdot J  \right)$ exists and is isomorphic to $\lim \left( \Lan _ J \cb (B , J-), F   \right) $
(see \cite[Proposition~4.57]{kelly82}). Therefore, by \eqref{eq:phase1-isomorphic-LanJB(b,J-) cong B(b,-)} and by the formula for pointwise right Kan extensions (see  \cite[Theorem~I.4.2]{Dubuc-KanExtensions} or, for instance, \cite[Theorem~4.6]{kelly82}), we conclude that $\Ran _ J \left( F\cdot J \right)$ exists and
we have the isomorphism
\begin{equation}\label{eq:right-Kan-extension-of-functors-in-the-image}
\Ran _ J \left( F\cdot J \right) B\cong \lim\left( \cb (B , J-), F\cdot J  \right)\cong \lim \left( \Lan _ J \cb (B , J-), F   \right)\cong \lim \left(  \cb (B , -), F   \right) \cong F(B)
\end{equation}
$\cv$-natural in $B\in\cb$ and $F\in \cv\textrm{-}\Cat[\cb, \cc]$.

Since we proved that $ \Ran _ J \left( F\cdot J \right) $ exists for any $G = F\circ J$ in $\imagefactorizationofprecomposition $, we conclude that $\imagefactorizationofprecompositionbo $ has a right $\cv $-adjoint, which we may denote by $\Ran _J $ by abuse of language. Finally,  by the natural isomorphism \eqref{eq:right-Kan-extension-of-functors-in-the-image} and Lemma \ref{coro:iso-coherence-fullyfaithful-rightadjoin}, we conclude that $\imagefactorizationofprecompositionbo $
is $\cv$-fully faithful.

\eqref{(f)t:V-lax} $\Rightarrow$ \eqref{(g)t:V-lax}.
Trivial.
\end{proof}

\begin{remark}
For $\cv=\Set$,
the equivalence \eqref{(h)t:V-lax} $\Leftrightarrow$ \eqref{(i)t:V-lax} of Theorem \ref{t:V-lax} was given in \cite[Theorem~1.1]{AeBSV}.
\end{remark}

\begin{example}Let $V$ be a frame, that is, a complete lattice satisfying the infinite distributive law of $\wedge$ over $\vee$, regarded as a symmetric monoidal closed category, where the multiplication and the unit  object are $\wedge$ and $1$, respectively. Thus, $V$ is a quantale, see \cite{Stubbe} and \cite{HST}. The hom-objects of the $V$-category $V$ are given by $V(a,b)=a\to b$, where $\to$ is the Heyting operation. For every $V$-category $X$, the hom-objects of $[X,V]$ are given, for every pair of $V$-functors $f,g:X\to V$,  by $$[X,V](f,g)=\displaystyle{\bigwedge_{x\in X}\big(f(x)\to g(x)\big)}.$$
Following Definition \ref{d:vlaxepi} and the equivalence  (a)$\Leftrightarrow$(b) of Theorem \ref{t:V-lax}, we see that a $V$-functor $j:X\to Y$ is a lax epimorphism in the 2-category $V$-$\Cat$ if and only if, for every pair of $V$-functors $f,g:Y\to V$, $$\displaystyle{\bigwedge_{y\in Y}\big(f(y)\to g(y)\big)=\bigwedge_{x\in X}\big(fj(x)\to gj(x)\big)}.$$
\end{example}

\begin{example}
Two important examples covered by Theorem \ref{t:V-lax} are discrete fibrations and split fibrations. Every functor $J:\ca \to \cb$ between (small) ordinary categories induces a functor $J^\ast :  \DisFib\left( \cb\right) \to \DisFib\left( \ca\right) $   between the categories of discrete fibrations, and a functor $\underline{J}^\ast : \Fib\left(\cb\right) \to \Fib\left(\ca\right) $ between the $2$-categories of split fibrations.   By the Grothendieck construction, these two functors are essentially the precomposition functors  $\Cat\left(J, \Set \right): \Cat\left( \cb , \Set \right)\to\Cat\left( \ca , \Set \right)$ and $\Cat$-$\Cat\left[J, \Cat \right] : \Cat\textrm{-}\Cat\left[ \cb , \Cat \right]\to\Cat\textrm{-}\Cat\left[ \ca , \Cat \right]$, respectively.  It is easy to see that $J$ is a lax epimorphism in the 2-category $Cat$ if and only if it is a lax epimorphism in the 2-category $\Cat$-$\Cat$, when regarded as a 2-functor between locally discrete categories.  This actually follows from the fact that we have a $2$-adjunction satisfying both conditions of Lemma \ref{lem:2-adjunction-preservation-of-lax-epi}, where the left $2$-adjoint is given by the inclusion $\Cat\to\Cat\textrm{-}\Cat $.  Thus, using the equivalence \eqref{(h)t:V-lax} $\Leftrightarrow$ \eqref{(a)t:V-lax} of Theorem \ref{t:V-lax}, we conclude that $J^{\ast}$ is fully faithful if and only if $\underline{J^{\ast}}$  is $\cv$-fully faithful, if and only if $J$ is a lax epimorphism in $\Cat$.
\end{example}

\begin{lemma}[Duality]\label{rem:duality-lax-epimorphisms-Vcat}
A morphism $J:\ca \to \cb$ is a lax epimorphism in $\cv\textrm{-}\Cat$
if and only if $J ^\op :\ca ^\op \to \cb ^\op$ is a lax epimorphism in $\cv\textrm{-}\Cat$ as well.
\end{lemma}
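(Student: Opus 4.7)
The plan is to use the definition of a lax epimorphism in $\cv\textrm{-}\Cat$ directly, combined with a natural isomorphism of hom-categories induced by the opposite-$\cv$-category construction. By definition, $J$ is a lax epimorphism in $\cv\textrm{-}\Cat$ precisely when, for every small $\cv$-category $\cc$, the (ordinary) functor
\[
\cv\textrm{-}\Cat(J, \cc)\colon \cv\textrm{-}\Cat(\cb,\cc) \to \cv\textrm{-}\Cat(\ca,\cc)
\]
is fully faithful.

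The key observation is the natural isomorphism of ordinary categories
\[
\Phi_{\cx,\cz}\colon \cv\textrm{-}\Cat(\cx, \cz) \xrightarrow{\;\cong\;} \cv\textrm{-}\Cat(\cx^{\op}, \cz^{\op})^{\op},
\]
sending a $\cv$-functor $F$ to $F^{\op}$ and a $\cv$-natural transformation $\alpha\colon F\to G$ to $\alpha^{\op}\colon G^{\op}\to F^{\op}$; this uses that $\cv$ is symmetric, so that the opposite $\cv$-category $\cx^{\op}$ is well defined. Naturality in both variables ensures that, under $\Phi$, the precomposition functor $\cv\textrm{-}\Cat(J, \cc)$ corresponds to the opposite $\cv\textrm{-}\Cat(J^{\op}, \cc^{\op})^{\op}$ of precomposition by $J^{\op}$.

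Since a functor between ordinary categories is fully faithful if and only if its opposite is, we obtain that $\cv\textrm{-}\Cat(J, \cc)$ is fully faithful if and only if $\cv\textrm{-}\Cat(J^{\op}, \cc^{\op})$ is. Letting $\cc$ range over all small $\cv$-categories (equivalently, letting $\cc^{\op}$ do so) then yields the desired equivalence: $J$ is a lax epimorphism in $\cv\textrm{-}\Cat$ if and only if $J^{\op}$ is. The only genuine verification needed is the functoriality and bijectivity of $\Phi_{\cx,\cz}$, which is routine; no substantial obstacle arises.
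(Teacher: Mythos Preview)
Your proof is correct and is essentially the same argument as the paper's, just unpacked at the level of hom-categories: the paper phrases it as the invertible $2$-functor $\op\colon \cv\textrm{-}\Cat\to \cv\textrm{-}\Cat^{\co}$ preserving lax epimorphisms, together with the self-coduality of lax epimorphisms (Remark~\ref{r:ff}), which is exactly your observation that $\Phi_{\cx,\cz}$ is an isomorphism and that full faithfulness is preserved under taking opposites.
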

\begin{proof}
Indeed, since the $2$-functor $\op :\cv\textrm{-}\Cat\to \cv\textrm{-}\Cat ^\co $ is invertible, it takes lax epimorphisms to lax epimorphisms.
Thus,  $J$ is a lax epimorphism in  $\cv\textrm{-}\Cat$ if, and only if, $\op  \left( J\right) $ is a lax epimorphism in $\cv\textrm{-}\Cat ^\co $ which, by Remark \ref{r:ff}, holds if and only if $J^\op $ is a lax epimorphism in  $\cv\textrm{-}\Cat$.

Therefore, assuming that $\cv _ 0$ is complete and cocomplete,
\begin{center}
$J$ is a $\cv$-lax epimorphism \, $\Leftrightarrow$ \, $J^\op $  is a $\cv$-lax epimorphism
\end{center}
by Theorem \ref{t:V-lax}.
\end{proof}

Recall that a $\cv$-functor $J: \ca\to\cb $ between small $\cv$-categories is \textit{$\cv$-dense} if and only if
its density comonad $\Lan _ J J $ is isomorphic to the identity on $\cb$ (see \cite[Theorem~5.1]{kelly82}).
Dually, $J $ is  \textit{$\cv$-codense} if and only if
the right Kan extension
$ \Ran _ J J $ is the identity. (Several concrete examples of ($\cv$-)codensity monads are given in \cite{AS21}.)

We say that $J$ is \textit{absolutely $\cv$-dense} if it is $\cv$-dense and $\Lan _ J J $
is preserved by any $\cv$-functor $F: \cb\to\cv $. Dually, we define absolutely $\cv$-codense $\cv$-functor.

The following characterization of lax epimorphisms as absolutely dense functors was given in \cite{AeBSV} for $\cv=\Set$:

\begin{theorem}\label{laxepi-if-and-only-if-absolutelydense}
Given a $\cv$-functor $J:\ca \to \cb$ between small $\cv$-categories $\ca$ and $\cb$,   the following conditions are equivalent.
\begin{enumerate}[(a)]
\item $J$ is a $\cv$-lax epimorphism.\label{(a):laxepi-if-and-only-if-absolutelydense}
\item $J$ is absolutely $\cv$-dense.\label{(b):laxepi-if-and-only-if-absolutelydense}
\item $J$ is absolutely $\cv$-codense.\label{(c):laxepi-if-and-only-if-absolutelydense}
\end{enumerate}
\end{theorem}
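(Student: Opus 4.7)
The plan is to prove (a) $\Leftrightarrow$ (b) directly via Theorem~\ref{t:V-lax}, and then to deduce (a) $\Leftrightarrow$ (c) by duality using Lemma~\ref{rem:duality-lax-epimorphisms-Vcat}.

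For (b) $\Rightarrow$ (a), I would assume that $J$ is absolutely $\cv$-dense, so that $\Lan_J J\cong \id_\cb$ and $\Lan_J J$ is preserved by every $F:\cb\to\cv$. Specializing to the representables $F=\cb(B,-)$ yields
\[ \cb(B,-)\;\cong\; \cb\bigl(B,\Lan_J J(-)\bigr)\;\cong\; \Lan_J\cb(B,J-), \]
where the first isomorphism comes from density and the second from the preservation; both are $\cv$-natural in $B\in\cb^\op$ since they arise from canonical comparison maps. This is precisely condition~\eqref{(e)t:V-lax} of Theorem~\ref{t:V-lax}, so $J$ is a $\cv$-lax epimorphism.

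For (a) $\Rightarrow$ (b), I must prove both that $\Lan_J J\cong \id_\cb$ (i.e., that $J$ is $\cv$-dense) and that this left Kan extension is preserved by every $F:\cb\to\cv$. For density, I would apply condition~\eqref{(f)t:V-lax} of Theorem~\ref{t:V-lax} with $\cc=\cb$: the $\cv$-functor $\cv\textrm{-}\Cat[J,\cb]$ is $\cv$-fully faithful, so its action on hom-objects at $\id_\cb$ and an arbitrary $K:\cb\to\cb$ supplies an isomorphism
\[ [\cb,\cb](\id_\cb,K)\;\xrightarrow{\cong}\;[\ca,\cb](J,KJ) \]
that coincides with precomposition by $J$. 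This isomorphism, natural in $K$, is exactly the universal property exhibiting $\id_\cb$ (equipped with the identity 2-cell $\id_J:J\to \id_\cb\cdot J$) as $\Lan_J J$. For the absoluteness, I would apply condition~\eqref{(a)t:V-lax} of Theorem~\ref{t:V-lax}: $\cv\textrm{-}\Cat[J,\cv]$ is $\cv$-fully faithful, so the counit of the adjunction $\Lan_J\dashv \cv\textrm{-}\Cat[J,\cv]$ is a $\cv$-natural isomorphism $\Lan_J(F\cdot J)\cong F$ for every $F:\cb\to\cv$. Combined with $\Lan_J J\cong \id_\cb$, this yields $F\cong F\circ \Lan_J J\cong \Lan_J(F\cdot J)$, which says that $F$ preserves $\Lan_J J$.

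Finally, (a) $\Leftrightarrow$ (c) is immediate from duality: by Lemma~\ref{rem:duality-lax-epimorphisms-Vcat}, $J$ is a $\cv$-lax epimorphism iff $J^\op:\ca^\op\to\cb^\op$ is, and by the equivalence (a) $\Leftrightarrow$ (b) already proved (applied to $J^\op$), this holds iff $J^\op$ is absolutely $\cv$-dense, which is by definition absolute $\cv$-codensity of $J$. The main obstacle will be the density step in (a) $\Rightarrow$ (b): one must verify that the isomorphism supplied by the $\cv$-full faithfulness of $\cv\textrm{-}\Cat[J,\cb]$ is indeed the canonical precomposition-by-$J$ map, so that $\id_\cb$ is genuinely exhibited as $\Lan_J J$ with the identity 2-cell as its universal unit.
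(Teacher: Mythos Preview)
Your proof is correct and reaches the same conclusion as the paper, but the route you take for the density part of \eqref{(a):laxepi-if-and-only-if-absolutelydense} $\Rightarrow$ \eqref{(b):laxepi-if-and-only-if-absolutelydense} is genuinely different. The paper first establishes \emph{codensity} of $J$ (i.e., $\Ran_J J\cong\id_\cb$) by a weighted-limit computation from condition~\eqref{(e)t:V-lax} of Theorem~\ref{t:V-lax}, and only then obtains density by applying Lemma~\ref{rem:duality-lax-epimorphisms-Vcat} to pass to $J^\op$. You instead invoke condition~\eqref{(f)t:V-lax} with $\cc=\cb$ and read off directly that $\id_\cb$ satisfies the universal property of $\Lan_J J$; this is shorter and avoids the detour through codensity and duality. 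For \eqref{(b):laxepi-if-and-only-if-absolutelydense} $\Rightarrow$ \eqref{(a):laxepi-if-and-only-if-absolutelydense}, the paper uses the full counit isomorphism $\Lan_J(F\cdot J)\cong F$ together with Lemma~\ref{coro:iso-coherence-fullyfaithful-rightadjoin}, whereas you specialize to representables to land on condition~\eqref{(e)t:V-lax}; both are fine. One small remark on your absoluteness step: the chain $F\cong F\circ\Lan_J J\cong \Lan_J(F\cdot J)$ is slightly redundant, since with unit $\id_J$ the canonical comparison map $\Lan_J(FJ)\to F\circ\Lan_J J = F$ \emph{is} the counit, so invertibility of the counit already is the preservation statement.
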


\begin{proof}
\eqref{(a):laxepi-if-and-only-if-absolutelydense} $\Rightarrow $ \eqref{(b):laxepi-if-and-only-if-absolutelydense}. Assume that  $J$ is a $\cv$-lax epimorphism.
By \eqref{(e)t:V-lax} of Theorem \ref{t:V-lax},  we have that $\cb (B, -)\cong \Lan _ J \cb (B, J-)$. Hence, since
$\lim \left(  \cb (B, -) , \id _\cb  \right)\cong B $ exists by the (strong) Enriched Yoneda Lemma,
we have that $ \lim \left( \Lan _ J \cb (B, J-) , \id _\cb  \right) $ exists and is isomorphic to $\lim \left(  \cb (B, -) , \id _\cb  \right)\cong B $ (in which isomorphisms are always $\cv$-natural in $B$).

Moreover, from the existence of $ \lim \left( \Lan _ J \cb (B, J-) , \id _\cb  \right) $, we get that
$ \lim \left( \cb (B, J-) , J  \right)$ exists and is isomorphic to $\lim \left( \Lan _ J \cb (B, J-) , \id _\cb  \right)\cong B $ (see \cite[Proposition~4.57]{kelly82}).

Finally, then, from the formula for pointwise right Kan extensions and the above, we get the $\cv$-natural isomorphisms (in $B\in \cb $)
$$ B \cong \lim \left(  \cb (B, -) , \id _\cb  \right)\cong \lim \left( \Lan _ J \cb (B, J-) , \id _\cb  \right) \cong \lim \left( \cb (B, J-) , J  \right) \cong  \Ran _J J(B) . $$
This proves that $\Ran _J J $ is the identity on $\cb$. That is to say, $J$ is $\cv$-codense.

Moreover, assuming that $J $ is a $\cv$-lax epimorphism, by Lemma \ref{rem:duality-lax-epimorphisms-Vcat}, $J^\op$ is a $\cv$-lax epimorphism and, hence, by the proved above, $J^\op$ is $\cv$-codense. Therefore $J$ is $\cv$-dense.

By  \eqref{(a)t:V-lax} of Theorem \ref{t:V-lax},  we have that $\cv\textrm{-}\Cat\left[ J , \cv \right] $ is $\cv $-fully faithful. Since $\cv $ is cocomplete, we get that  $\Lan _ J $ exists and there is an isomorphism  $\Lan _ J \left( F\cdot J\right)\cong F $, $\cv$-natural in $F\in \cv\textrm{-}\Cat[\cb, \cv] $, given by the counit of $\Lan _ J \dashv \cv\textrm{-}\Cat\left[ J , \cv \right] $. This shows that $\Lan _J J $ is preserved by any $\cv$-functor $F: \cb\to\cv $.

\eqref{(b):laxepi-if-and-only-if-absolutelydense} $\Rightarrow $ \eqref{(a):laxepi-if-and-only-if-absolutelydense}.
Assume that $J$ is absolutely $\cv $-dense.
We conclude that there is a natural isomorphism $ \Lan _ J \left( F\cdot J\right)\cong F $. Therefore, by Lemma \ref{l:ff-enr}, we conclude that
 $\cv\textrm{-}\Cat\left[ J , \cv \right] $
is $\cv $-fully faithful. By Theorem \ref{t:V-lax}, this proves that $J$
is a $\cv$-lax epimorphism.

\eqref{(a):laxepi-if-and-only-if-absolutelydense} $\Leftrightarrow $ \eqref{(c):laxepi-if-and-only-if-absolutelydense}.
By Lemma \ref{rem:duality-lax-epimorphisms-Vcat} and by the proved above, we conclude that
\begin{center}
$J$ is a $\cv$-lax epimorphism \, $\Leftrightarrow $ \,   $J^\op $ is absolutely $\cv$-dense \, $\Leftrightarrow $  $J$ is absolutely $\cv$-codense.
\end{center}
\end{proof}

\begin{remark}[Counterexample: Density and Codensity]
Of course, density and codensity are not enough for a functor to be a lax epimorphism: for $\mathsf{1} $ the  terminal object  in $\Cat $, the functor  $J :\mathsf{1}\sqcup \mathsf{1}\to\mathsf{1}  $ is dense and codense, but not a lax epimorphism. Moreover,
$\Ran _{J} J$  (respectively, $\Lan _{J} J $)  is preserved by $F : \mathsf{1}\to \Set  $  if and only if the image of $F$ is a preterminal object, \textit{i.e.} the terminal set $\mathsf{1} $ (respectively, a preinitial object, \textit{i.e.} the empty set $\emptyset $ ); see \cite[Remark~4.14]{FLN2} and \cite[Remark~4.5]{FLN1}.
\end{remark}


Let us recall from \cite{SW} that a 1-cell $J:A\to B$ of $\Cat$ is an initial functor precisely when
\begin{equation*}
\begin{tikzpicture}[x=1.9cm, y=1.9cm]
\node (a) at (0,0) {$\ca $};
\node (b) at (1, 0) { $\cb$ };
\node (c) at (1,-1) {$\Set$};
\node (d) at (0,-1) {$ \cb $};
\draw[->] (a)--(b) node[midway,above] {$ J  $};
\draw[->] (b)--(c) node[midway,right] {$ \ast\!\downarrow  $};
\draw[->] (a)--(d) node[midway,left] {$ J  $};
\draw[->] (d)--(c) node[midway,below] {$ \ast\!\downarrow  $};
\draw[double,->] (0.3,-0.55)--(0.7,-0.55) node[midway,above] {$\id $};
\end{tikzpicture}
\end{equation*}
exhibits $\ast\!\!\downarrow$ as a left Kan extension of $\ast \!\!\downarrow \!\cdot E$ along $E$, where $\ast\!\!\downarrow$ is the constant functor on the terminal object $1$.
This characterization is a key point for the description of the comprehensive factorization system by means of left Kan extensions given by Street and Walters. The equivalence (\ref{(h)t:V-lax}) $\Leftrightarrow$ (\ref{item-t:Lan}) of Theorem \ref{t:V-lax} shows that  lax epimorphisms in $\cv$-$\Cat$ have an analogous presentation. More precisely:

A $\cv$-functor $\ca\xrightarrow{J}\cb$ is a lax epimorphism in $\cv$-$\Cat$ precisely when
\begin{equation}\label{eq:extra}
\begin{tikzpicture}[x=2cm, y=2cm]
\node (a) at (0,1) {$\ca $};
\node (b) at (1,1) {$\cb $ };
\node (d) at (0,0) {$\cb $ };
\node (e) at (1,0) {$ [\cb^{\op},\cv]  $};
\draw[->] (a)--(b) node[midway,above] {$ J $ };
\draw[->] (a)--(d) node[midway,left] {$J   $ };
\draw[->] (d)--(e) node[midway,below] {$Y   $ };
\draw[->] (b)--(e) node[midway,right] {$Y   $ };
\draw[double,->] (0.3,0.45)--(0.7,0.45) node[midway,above] {$\id $};
\end{tikzpicture}
\end{equation}
exhibits $Y$ as a left Kan extension of $Y  J$ along $J$.

In Section 4, we gave a concrete description of the orthogonal $LaxEpi$-factorization system in $\Cat$ (Theorem \ref{t:f-Cat}), in particular  $\Cat$ satisfies the hypotheses of Theorem \ref{t:ofs}.   The  characterization of the lax epimorphisms in $\cv$-$\Cat$ given by (\ref{eq:extra})   suggests the possibility of describing the orthogonal $LaxEpi$-factorization system in $\cv$-$\Cat$ by means of left Kan extensions in the style of \cite{SW} for the comprehensive factorization. This is far from having an obvious path, and will be the subject of future work.

\end{document}